\title{Enumerating 1324-avoiders with few inversions}
\author{Svante Linusson and Emil Verkama}
\DeclareMathOperator{\Av}{Av}
\DeclareMathOperator{\av}{av}
\DeclareMathOperator{\comp}{comp}
\DeclareMathOperator{\inv}{inv}
\DeclareMathOperator{\rc}{rc}
\newcommand{\delete}{\smallsetminus}
\newcommand{\gridcolor}{blue!60!black!25}
\newcommand{\pt}{\small\(\bm\times\)}
\begin{document}
\maketitle

\begin{abstract} \noindent
  We enumerate the numbers \(\av_n^k(1324)\) of 1324-avoiding \(n\)-permutations with exactly \(k\) inversions for all \(k\) and \(n \geq (k+7)/2\). The result depends on a structural characterization of such permutations in terms of a new notion of almost-decomposability. In particular, our enumeration verifies half of a conjecture of Claesson, Jelínek and Steingrímsson, according to which \(\av_n^k(1324) \leq \av_{n+1}^k(1324)\) for all \(n\) and \(k\). Proving also the other half would improve the best known upper bound for the exponential growth rate of the number of \(1324\)-avoiders from \(13.5\) to approximately \(13.002\).
\end{abstract}

\section{Introduction}

A permutation \(\pi \in \mathfrak S_n\) \emph{contains} a \emph{pattern} \(\tau \in \mathfrak S_m\) if there exist indices \(i_1 < \ldots < i_m\) such that \(\pi(i_a) < \pi(i_b)\) if and only if \(\tau(a) < \tau(b)\) for all \(a,b \in [m]\). Otherwise, \(\pi\) \emph{avoids} \(\tau\). An \emph{inversion} in \(\pi\) is a pair of indices \((i,j)\) such that \(i < j\) and \(\pi_i > \pi_j\). We denote by \(\Av_n(\tau)\) the set of all permutations of length \(n\) avoiding \(\tau\), and by \(\Av_n^k(\tau) \subseteq \Av_n(\tau)\) those with exactly \(k\) inversions. Furthermore, we set \(\av_n(\tau) = |{\Av_n(\tau)}|\) and \(\av_n^k(\tau) = |{\Av_n^k(\tau)}|\). Two patterns \(\sigma\) and \(\tau\) are called \emph{Wilf equivalent} if \(\av_n(\sigma) = \av_n(\tau)\) for all \(n\).

\subsection{Avoiding 1324}

It is a well-known that
\begin{equation*}
  \av_n(\tau) = C_n = \frac{1}{n+1} \binom{2n}{n}
\end{equation*}
for all patterns \(\tau\) of length three, but determining \(\av_n(\tau)\) for patterns of length four is already much more difficult. The patterns of length four have three distinct Wilf equivalence classes (see \cite{backelin_wilf-equivalence_2007,stankova_forbidden_1994}), usually represented by \(1324\), \(1342\) and \(1324\). Exact (though complicated) formulas for \(\av_n(1234)\) and \(\av_n(1342)\) were found by Gessel in 1990 \cite{gessel_symmetric_1990} and Bóna in 1997 \cite{bona_exact_1997}, respectively, but \(\av_n(1324)\) is still unknown. For a thorough exposition of these topics, see Bóna \cite{bona_combinatorics_2022}, Kitaev \cite{kitaev_patterns_2011} or Vatter \cite{vatter_permutation_2015}.

When exact enumeration fails, we turn to asymptotics. The \emph{Stanley--Wilf limit}
\begin{equation*}
  L(\tau) = \lim_{n \to \infty} \av_n(\tau)^{1 / n},
\end{equation*} 
exists for all patterns \(\tau\) due to the Marcus--Tardos theorem \cite{arratia_stanley-wilf_1999,marcus_excluded_2004}. However, even here, only relatively loose bounds are known when \(\tau = 1324\). Table \ref{tab:bounds} shows the timeline of the evolution of these bounds; currently they are \(10.27 < L(1324) < 13.5\) \cite{bevan_structural_2020}. Since \(L(1234) = 9\) and \(L(1342) = 8\), \(1324\) is significantly easier to avoid than the other patterns of length four. Conway, Guttmann and Zinn-Justin have convincingly estimated that \(L(1324) \approx 11.600 \pm 0.003\) \cite{conway_1324-avoiding_2015,conway_1324-avoiding_2018}.

\begin{table}[htb]
  \centering
  \begin{tabular}{lll} \toprule
    & Lower & Upper \\ \midrule
    2004. Bóna \cite{bona_simple_2004} & & 288 \\
    2005. Bóna \cite{bona_limit_2005} & 9 & \\
    2006. Albert et al. \cite{albert_stanley-wilf_2006} & 9.47 & \\
    2012. Claesson, Jelínek and Steingrímsson \cite{claesson_upper_2012} & & 16 \\
    2014. Bóna \cite{bona_new_2014} & & 13.93 \\
    2015. Bóna \cite{bona_new_2015} & & 13.74 \\
    2015. Bevan \cite{bevan_permutations_2015} & 9.81 \\
    2020. Bevan et al. \cite{bevan_structural_2020} & 10.27 & 13.5 \\ \bottomrule
  \end{tabular}
  \caption{Best known upper and lower for \(L(1324)\) throughout history.}
  \label{tab:bounds}
\end{table}

One possible avenue towards improvement is suggested by a conjecture of Claesson, Jelínek and Steingrímsson.

\begin{conjecture}[Conjecture 13 in \cite{claesson_upper_2012}] \label{conj:1324anders}
  For all nonnegative integers \(n\) and \(k\), 
  \begin{equation*}
    \av_n^k(1324) \leq \av_{n+1}^k(1324).
  \end{equation*}
\end{conjecture}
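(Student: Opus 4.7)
The plan is to deduce the inequality from the enumeration of \(\av_n^k(1324)\) in the range \(n \geq (k+7)/2\) that the paper establishes, and then verify by inspection that the resulting expression is non-decreasing in \(n\).

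First I would exploit the direct-sum decomposition \(\pi = \alpha_1 \oplus \cdots \oplus \alpha_m\), using that \(\inv(\pi) = \sum_i \inv(\alpha_i)\), that \(1324\)-avoidance passes to each indecomposable summand, and that every non-trivial \(\alpha_i\) contributes at least \(|\alpha_i|-1\) to the inversion count. The ``almost-decomposability'' structural theorem announced in the abstract should then sharpen this to a canonical form: in the stable range, every \(\pi \in \Av_n^k(1324)\) should decompose as \(\mathrm{id}_a \oplus \rho \oplus \mathrm{id}_b\), or a mild variant thereof, where the \emph{core} \(\rho\) ranges over a finite set determined by \(k\) alone.

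Given this, the enumeration reduces to counting the ways to place each core together with the surrounding identity blocks, producing an expression of the form \(\sum_\rho \binom{n - c_\rho}{d_\rho}\) with constants \(c_\rho, d_\rho\) depending only on \(k\). The inequality then follows from the elementary fact that each \(\binom{n-c}{d}\) is non-decreasing in \(n\) once \(n \geq c+d\), a condition implied by the hypothesis \(n \geq (k+7)/2\). Equivalently, one can convert this into an explicit injection \(\Phi \colon \Av_n^k(1324) \hookrightarrow \Av_{n+1}^k(1324)\) by enlarging a canonical identity block, \(\mathrm{id}_a \oplus \rho \oplus \mathrm{id}_b \mapsto \mathrm{id}_{a+1} \oplus \rho \oplus \mathrm{id}_b\); this is inversion-preserving and pattern-preserving, and the canonicity of the decomposition ensures injectivity.

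The main obstacle is the structural theorem itself. The bound \((k+7)/2\) is only linear in \(k\), much stronger than the \(n \geq 2k+1\) one would need to guarantee a trivial summand purely from inversion bookkeeping, so \(1324\)-avoidance must be used in a genuinely restrictive way to pin down a canonical form throughout this range. Identifying the right notion of almost-decomposability, proving its exhaustiveness, and extracting the explicit constants \(c_\rho, d_\rho\) is where the real work lies; the remaining monotonicity check (equivalently, the injectivity of \(\Phi\)) is then a straightforward consequence.
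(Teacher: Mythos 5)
There is a genuine gap, and it sits exactly where you locate ``the real work'' but then assume it resolves into a form that the paper shows is false. You posit that in the range \(n \ge (k+7)/2\) every \(\pi \in \Av_n^k(1324)\) acquires a canonical shape \(\mathrm{id}_a \oplus \rho \oplus \mathrm{id}_b\) (or a mild variant) with the core \(\rho\) drawn from a finite set depending only on \(k\), so that the injection is ``enlarge an identity block'' and the count becomes a sum of binomials monotone in \(n\). That picture is correct only in the old range \(k \le n-2\), where every avoider is decomposable and has the form \(\pi^{(1)} \oplus 1 \oplus \cdots \oplus 1 \oplus \pi^{(2)}\); there your map coincides with the paper's injection \(g\). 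But the new range \(n-1 \le k \le 2n-7\) contains \emph{indecomposable} avoiders, and the paper's notion of almost-decomposability (Definition \ref{def:almost decomp}) is not a direct-sum normal form: it says \(\pi\) itself is indecomposable, while one of \(\pi \delete 1\), \(\pi \delete n\), \(\pi \delete \pi_1\), \(\pi \delete \pi_n\) is decomposable. Such a \(\pi\) has no identity summand whatsoever, so your map \(\Phi\) is undefined on it, no expression \(\sum_\rho \binom{n-c_\rho}{d_\rho}\) follows, and the ``canonicity ensures injectivity'' step has nothing to attach to.

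What the paper actually does for these permutations is delete a boundary entry, apply \(g\) to the resulting decomposable permutation, and reinsert the entry; making this well defined and injective is delicate, requiring Proposition \ref{prop:forbidden removable point combinations} on which of the four deletions can be simultaneously decomposable, the priority rules in the definition of \(f\), and Lemma \ref{lem:not unremovable to removable} showing \(f\) never creates a higher-priority deletion in its output, plus the observation that the images of \(f\) and \(g\) are disjoint because they have different numbers of components. None of this is ``a straightforward consequence'' of the structure theorem, and the enumeration in Theorem \ref{thm:1324} further requires identifying and counting the complement of the combined image (the generating function \(R_n(x)\)), which your plan does not address. Finally, note that even with all of this carried out, one only obtains the inequality for \(k \le 2n-7\): the statement you set out to prove is, in the paper and to date, still open in general, so a proposal confined to the stable range cannot prove it as stated.
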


The statement is deceptively simple and seems as if it should be `obviously true', but so far it remains open. A proof would improve our understanding of the structure of \(1324\)-avoiders, which is of independent interest. Moreover, it was shown in \cite{claesson_upper_2012} that the conjecture gives a new upper bound \(L(1324) \leq \exp\big(\pi \sqrt{2/3}\big) < 13.002\), using the fact that \(\av_n^k(1324)\) is constant when the number \(k\) of inversions is fixed and \(n \geq k+2\). Our main result partially answers the conjecture.

\begin{theorem} \label{thm:1324}
  For all nonnegative integers \(k\) and \(n \geq \frac{k+7}{2}\),
  \begin{equation*}
    \av_n^k(1324) = [x^k] \left(P(x)^2 - \frac{R_n(x)}{1-x}\right),
  \end{equation*}
  where
  \begin{equation*}
  R_n(x) = 2(2+x) x^{n-1} P(x)^2,
  \end{equation*}
  and \(P(x)\) is the generating function for the partition numbers. In particular,
  \begin{equation*}
    \av_{n+1}^k(1324) - \av_n^k(1324) = [x^k] R_n(x) \geq 0.
  \end{equation*} 
\end{theorem}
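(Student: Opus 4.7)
My plan is to combine a structural lemma about 1324-avoiders under direct sum with a bijection between sum-indecomposable 132-avoiders and partitions, yielding a generating function that gives the claimed formula. The first step is a \emph{structural lemma} classifying 1324-avoiders via their sum-indecomposable decomposition $\pi = \sigma_1 \oplus \cdots \oplus \sigma_m$. Since the values in $\sigma_r$ are all less than those in $\sigma_{r'}$ for $r < r'$, a potential 1324-pattern distributes across the components in a very constrained way, and a case analysis on the possible splits of the four positions shows that $\pi$ avoids 1324 if and only if each $\sigma_i$ avoids 1324, $\sigma_1$ (when $m \geq 2$) avoids 132, $\sigma_m$ (when $m \geq 2$) avoids 213, and every middle component $\sigma_j$ with $1 < j < m$ is a singleton. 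This realizes the ``almost-decomposability'' mentioned in the abstract.

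Next, a \emph{bijection to partitions}: the Lehmer code $L_i = |\{j > i : \pi(j) < \pi(i)\}|$ of any 132-avoider is weakly decreasing, and for a sum-indecomposable 132-avoider of length $n \geq 1$ the sequence $(L_1, \ldots, L_{n-1})$ is a partition $\lambda$ of $\inv(\pi) = k$ with $n = \max_i(\lambda_i + i)$; each partition arises from exactly one such permutation. Hence the inversion generating function for sum-indecomposable 132-avoiders, summed over all lengths, is $P(x)$. A dual argument using the reverse Lehmer code $L'_i = |\{j < i : \pi(j) > \pi(i)\}|$, which is weakly increasing for 213-avoiders, gives the same generating function for sum-indecomposable 213-avoiders.

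Combining these steps yields the bivariate generating function
\[
  A(x,y) = 1 + I(x,y) + \frac{I_L(x,y)\, I_R(x,y)}{1-y},
\]
where $I$, $I_L$, $I_R$ enumerate sum-indecomposable 1324-, 132-, and 213-avoiders respectively by inversions and length, and the factor $1/(1-y)$ handles the stretch of middle singletons. Since $[x^k] I(x,y)$ has $y$-degree at most $k+1$ and $[x^k] I_L(x,y) I_R(x,y)$ has $y$-degree at most $k+2$, while $I_L(x,1) = I_R(x,1) = P(x)$, the stabilization $\av_n^k(1324) = [x^k] P(x)^2$ for $n \geq k+2$ is immediate. For $(k+7)/2 \leq n < k+2$ the boundary correction is
\[
  [x^k] P(x)^2 - \av_n^k(1324) = \sum_{j>n}[x^k y^j] I_L I_R - [x^k y^n] I,
\]
and a pair of partitions $(\lambda, \mu)$ contributes to $[x^k y^j] I_L I_R$ at $j = n_\lambda + n_\mu$. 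The deficiency $k+2-j$ measures the distance of each partition from one of the two extreme shapes $(|\nu|)$ or $(1^{|\nu|})$, and in the specified range of $n$ only extreme and a controlled set of near-extreme configurations contribute. After absorbing the sum-indecomposable 1324-avoider term, the total collapses to $[x^k] R_n(x)/(1-x)$: the factor $2$ in $R_n$ encodes the left-right symmetry, $(2+x)$ packages the first-order near-extreme adjustment, and $1/(1-x)$ sums the boundary contributions. The inequality then follows from $R_n - R_{n+1} = R_n (1-x)$, giving $\av_{n+1}^k - \av_n^k = [x^k] R_n \geq 0$, as $R_n$ is a product of series with non-negative coefficients.

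The main obstacle I expect is the boundary analysis: the structural lemma and partition bijection are standard-flavored, but identifying precisely which near-extreme partition pairs survive for $n \geq (k+7)/2$, and verifying their exact cancellation with the sum-indecomposable 1324-avoider contribution, is what pins down the threshold $(k+7)/2$ in the theorem.
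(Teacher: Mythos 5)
Your proposal has a genuine gap at its center. The bookkeeping identity you set up is fine: writing $A(x,y)=1+I(x,y)+I_L(x,y)I_R(x,y)/(1-y)$ and deducing $[x^k]P(x)^2-\av_n^k(1324)=\sum_{j>n}[x^ky^j]\,I_LI_R-[x^ky^n]\,I$ is correct, and the decomposable side (the CJS structure theorem for decomposable $1324$-avoiders plus the Lehmer-code bijection to partitions) is standard and matches the paper's starting point. But everything then hinges on the term $[x^ky^n]I$, the number of \emph{indecomposable} $1324$-avoiders of length $n$ with $k$ inversions in the range $\frac{k+7}{2}\le n<k+2$, and your proposal offers no way to compute it: the sentence ``after absorbing the sum-indecomposable 1324-avoider term, the total collapses to $[x^k]R_n(x)/(1-x)$'' is an assertion of exactly the quantity to be proved, and nothing in the partition-pair ``near-extreme'' analysis touches it. This is precisely where the paper's work lies: it proves (Theorem \ref{thm:removepoint}, via the region-based inversion-counting Lemmas \ref{lem:at least one corner}--\ref{lem:sides decomposable}) that every indecomposable $\pi\in\Av_n^k(1324)$ with $k\le 2n-7$ is \emph{almost decomposable} in the sense of Definition \ref{def:almost decomp} -- an indecomposable permutation one of whose boundary deletions $\pi\delete 1,\pi\delete n,\pi\delete\pi_1,\pi\delete\pi_n$ is decomposable -- then builds the injection $f:\mathcal A_n^k\to\Av_{n+1}^k(1324)$ (Section \ref{sec:injection}, where injectivity requires the nontrivial Lemma \ref{lem:not unremovable to removable}), and finally enumerates the complement of the combined image (Section \ref{sec:difference}), which is where $R_n(x)=2(2+x)x^{n-1}P(x)^2$ and the threshold $k\le 2n-7$ actually come from (the counterexample $3612745$ shows the threshold is sharp for this method). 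Your proposal contains no analogue of any of these steps, so the threshold $\frac{k+7}{2}$ is never explained and the claimed collapse is unsupported.

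A secondary but telling misreading: your ``structural lemma'' (first component avoids $132$, last avoids $213$, middle components are singletons) describes \emph{decomposable} $1324$-avoiders and is equation \eqref{eq:1324 form} of the paper; it does not ``realize the almost-decomposability mentioned in the abstract,'' which by Definition \ref{def:almost decomp} is a property of \emph{indecomposable} permutations. Also note that your closing inequality argument ($R_n-R_{n+1}=(1-x)R_n$, hence $\av_{n+1}^k-\av_n^k=[x^k]R_n$) presupposes the main formula rather than contributing to its proof. To repair the proposal you would need, at minimum, an independent enumeration or structural control of indecomposable $1324$-avoiders with $k\le 2n-7$ inversions; the paper's evidence is that this requires the almost-decomposability machinery or something of comparable strength, not a refinement of the partition boundary analysis.
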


The proof will rely on a new notion of almost decomposable permutations, Definition \ref{def:almost decomp}. First we define in the following subsection decomposable permutations and explain the constants \(\av_{k+2}^k(1324)\) and the relation to partition numbers.

\subsection{Direct sums and decomposability}

For two permutations \(\sigma \in \mathfrak S_n\) and \(\tau \in \mathfrak S_m\), we define the \emph{direct sum} \(\sigma \oplus \tau \in \mathfrak S_{n+m}\) by
\begin{equation*}
  (\sigma \oplus \tau)(i) = \begin{cases}
    \sigma(i) & \text{if } i \leq n, \\
    n + \tau(i-n) & \text{if } i > n.
  \end{cases}
\end{equation*}
For example, \(231 \oplus 21\) is obtained in the following way.
\begin{center}
  \begin{tikzpicture}[scale=0.4]
    \clip (0.5,-1.7) rectangle (6,3.5);
    \draw[step=1, thick, \gridcolor] (0.5,0.5) grid (3.5,3.5);
    \foreach \x\y in {1/2,2/3,3/1}
    {
      \node at (\x,\y-0.004) {\pt};
      \node at (\x,-0.3) {\(\y\)};
    }
    \node at (4.85,2) {\(\oplus\)};
  \end{tikzpicture}
  \begin{tikzpicture}[scale=0.4]
    \clip (0.5,-2.2) rectangle (5,2.5);

    \draw[step=1, thick, \gridcolor] (0.5,0.5) grid (2.5,2.5);
    \foreach \x\y in {1/2,2/1}
    {
      \node at (\x,\y-0.004) {\pt};
      \node at (\x,-0.3) {\(\y\)};
    }
    \node at (3.85,1.5) {\(=\)};
  \end{tikzpicture}
  \begin{tikzpicture}[scale=0.4]
    \clip (0.5,-0.7) rectangle (5.5,5.5);
    \fill[blue!15] (0.5,0.5) rectangle (3.5,3.5);
    \fill[blue!15] (3.5,3.5) rectangle (5.5,5.5);

    \draw[step=1, thick, \gridcolor] (0.5,0.5) grid (5.5,5.5);
    \foreach \x\y in {1/2,2/3,3/1,4/5,5/4}
    {
      \node at (\x,\y-0.004) {\pt};
      \node at (\x,-0.3) {\(\y\)};
    }
  \end{tikzpicture}
\end{center}

If a permutation \(\pi\) is the direct sum of two nonempty permutations, we call \(\pi\) \emph{decomposable}, and otherwise \emph{indecomposable}. Notice that \(\pi\) can be written uniquely as a direct sum
\begin{equation*}
  \pi = \pi^{(1)} \oplus \pi^{(2)} \oplus \ldots \oplus \pi^{(c)},
\end{equation*}
where each \emph{component} \(\pi^{(i)}\) is indecomposable. The formula (see \cite[Lemma 8]{claesson_upper_2012})
\begin{equation*}
  \comp(\pi) + \inv(\pi) \geq |\pi|,
\end{equation*}
where \(\comp(\pi)\), \(\inv(\pi)\) and \(|\pi|\) denote the number of components, the number of inversions and the length of \(\pi\), respectively, indicates that a permutation with few inversions should have many components. In particular, if \(\inv(\pi) \leq n - 2\), then \(\comp(\pi) \geq |\pi| - \inv(\pi) \geq 2\); in other words, \(\pi\) must be decomposable. It is easy to see that a decomposable permutation \(\pi\) avoids \(1324\) if and only if it is of the form
\begin{equation} \label{eq:1324 form}
  \pi = \pi^{(1)} \oplus 1 \oplus 1 \oplus \ldots \oplus 1 \oplus \pi^{(2)},
\end{equation}
where \(\pi^{(1)}\) avoids \(132\) and \(\pi^{(2)}\) avoids \(213\). The \emph{inversion table} \(b_1 b_2 \ldots b_n\) of a \(132\)-avoider of length \(n\), defined by \(b_i = |\{j > i : \pi_j < \pi_i\}|\), is weakly decreasing and therefore -- with the exclusion of trailing \(0\)'s -- a partition of \(\inv(\pi)\). It follows that
\begin{equation*}
  \av_n^k(132) = \av_n^k(213) = p(k) 
\end{equation*}
for all \(n \geq k+2\), where \(p(k)\) is the \(k\)th partition number. Hence, the formula \eqref{eq:1324 form} gives the result from \cite{claesson_upper_2012} that
\begin{equation*}
  \av_n^k(1324) = [x^k] P(x)^2,
\end{equation*}
where \(P(x) = \sum_{k \geq 0} p(k) x^k\) and \(n \geq k + 2\).

\subsection{Interpreting the main result}

Observe that due to the preceding discussion, Conjecture \ref{conj:1324anders} trivially holds (with equality) for all \(n \geq k+2\). Our main result, Theorem \ref{thm:1324}, improves this bound to \(n \geq \frac{k + 7}{2}\), and therefore essentially proves half of the conjecture. Moreover, we provide expressions for generating functions whose coefficients \([x^k]\) coincide with \(\av_n^k(1324)\) for all $k,n$ in that range. This enumeration is obtained using a new structural characterization of \(1324\)-avoiders satisfying the \(n \geq \frac{k+7}{2}\) condition: all such permutations are either decomposable or \emph{almost decomposable} -- see Definition \ref{def:almost decomp}. We proceed to construct an injection \(\Av_n^k(1324) \to \Av_{n+1}^k(1324)\), and finally enumerate the permutations not contained in its image. Almost-decomposability is reminiscent of decomposability, so it is not surprising that the partition numbers show up.

A useful way of thinking about Conjecture \ref{conj:1324anders} is through Table \ref{tab:1324 diagram}, in which the entry on row \(n\) and column \(k\) equals \(\av_n^k(1324)\). The conjecture is equivalent to the statement that each column of the diagram is weakly increasing as \(n\) increases. The blue cells indicate the constant parts of each column; the sequence \(1,2,5,10,20,\ldots\) comes from the generating function \(P(x)^2\). The red cells contain the new numbers enumerated by Theorem \ref{thm:1324}. Specifically, the sequence of numbers in the blue and red cells on row \(n\) is given by the first \(2n-6\) coefficients of the generating function \(P(x)^2 - R_n(x)/(1 - x)\).

\begin{table}[htb]
  \centering
  \begin{tblr}{
    hline{2} = {1-14}{black},
    vline{2} = {1-13}{black},
    rows = {mode=math, rowsep=1pt},
    cell{-}{-} = {c},
    cell{3}{2} = {bg=blue!40},
    cell{4}{2-3} = {bg=blue!40},
    cell{5}{2-4} = {bg=blue!40},
    cell{6}{2-5} = {bg=blue!40},
    cell{7}{2-6} = {bg=blue!40},
    cell{8}{2-7} = {bg=blue!40},
    cell{9}{2-8} = {bg=blue!40},
    cell{10}{2-9} = {bg=blue!40},
    cell{11}{2-10} = {bg=blue!40},
    cell{12}{2-11} = {bg=blue!40},
    cell{13}{2-12} = {bg=blue!40},
    cell{7}{7} = {bg=red!40},
    cell{8}{8-9} = {bg=red!40},
    cell{9}{9-11} = {bg=red!40},
    cell{10}{10-13} = {bg=red!40},
    cell{11}{11-14} = {bg=red!40},
    cell{12}{12-14} = {bg=red!40},
    cell{13}{13-14} = {bg=red!40},
  }
    n \backslash k & 0 & 1 & 2 & 3 & 4 & 5 & 6 & 7 & 8 & 9 & 10 & 11 & 12 \\
    1 & 1 \\
    2 & 1 & 1 \\
    3 & 1 & 2 & 2 & 1 \\
    4 & 1 & 2 & 5 & 6 & 5 & 3 & 1 \\
    5 & 1 & 2 & 5 & 10 & 16 & 20 & 20 & 15 & 9 & 4 & 1 \\
    6 & 1 & 2 & 5 & 10 & 20 & 32 & 51 & 67 & 79 & 80 & 68 & 49 & 29 & \ldots \\
    7 & 1 & 2 & 5 & 10 & 20 & 36 & 61 & 96 & 148 & 208 & 268 & 321 & 351 & \ldots \\
    8 & 1 & 2 & 5 & 10 & 20 & 36 & 65 & 106 & 171 & 262 & 397 & 568 & 784 & \ldots \\
    9 & 1 & 2 & 5 & 10 & 20 & 36 & 65 & 110 & 181 & 286 & 443 & 664 & 985 & \ldots \\
    10 & 1 & 2 & 5 & 10 & 20 & 36 & 65 & 110 & 185 & 296 & 467 & 714 & 1077 & \ldots \\
    11 & 1 & 2 & 5 & 10 & 20 & 36 & 65 & 110 & 185 & 300 & 477 & 738 & 1127 & \ldots \\
    12 & 1 & 2 & 5 & 10 & 20 & 36 & 65 & 110 & 185 & 300 & 481 & 748 & 1151 & \ldots
  \end{tblr}
  \caption{The numbers \(\av_n^k(1324)\).}
  \label{tab:1324 diagram}
\end{table}

The differences \(\av_{n+1}^k(1324) - \av_n^k(1324)\) are displayed in Table \ref{tab:1324 difference diagram}. The blue \(0\)'s come from the constant part of each column, and the numbers in the red cells are given by \(R_n(x)\). The diagram also shows that \(n \geq \frac{k+7}{2}\) is the best possible bound for our method: if \(n < \frac{k+7}{2}\) (and \(k\) is not too small), then \(\av_{n+1}^k(1324) - \av_n^k(1324)\) no longer equals \([x^k] R_n(x)\). The reason for this will be explained in Section \ref{sec:almost decomposable}.

\begin{table}[htb]
  \centering
  \begin{tblr}{
    hline{2} = {1-15}{},
    vline{2} = {1-13}{},
    rows = {mode=math, rowsep=1pt},
    cell{-}{-} = {c},
    cell{3}{2} = {bg=blue!40},
    cell{4}{2-3} = {bg=blue!40},
    cell{5}{2-4} = {bg=blue!40},
    cell{6}{2-5} = {bg=blue!40},
    cell{7}{2-6} = {bg=blue!40},
    cell{8}{2-7} = {bg=blue!40},
    cell{9}{2-8} = {bg=blue!40},
    cell{10}{2-9} = {bg=blue!40},
    cell{11}{2-10} = {bg=blue!40},
    cell{12}{2-11} = {bg=blue!40},
    cell{13}{2-12} = {bg=blue!40},
    cell{7}{7} = {bg=red!40},
    cell{8}{8-9} = {bg=red!40},
    cell{9}{9-11} = {bg=red!40},
    cell{10}{10-13} = {bg=red!40},
    cell{11}{11-15} = {bg=red!40},
    cell{12}{12-15} = {bg=red!40},
    cell{13}{13-15} = {bg=red!40},
  }
    n \backslash k & 0 & 1 & 2 & 3 & 4 & 5 & 6 & 7 & 8 & 9 & 10 & 11 & 12 & 13 \\
    1 & 0 & 1 \\
    2 & 0 & 1 & 2 & 1 \\
    3 & 0 & 0 & 3 & 5 & 5 & 3 & 1 \\
    4 & 0 & 0 & 0 & 4 & 11 & 17 & 19 & 15 & 9 & 4 & 1 \\
    5 & 0 & 0 & 0 & 0 & 4 & 12 & 31 & 52 & 70 & 76 & 67 & 49 & 29 & 14 &\ldots \\
    6 & 0 & 0 & 0 & 0 & 0 & 4 & 10 & 29 & 69 & 128 & 200 & 272 & 322 & 333 & \ldots \\
    7 & 0 & 0 & 0 & 0 & 0 & 0 & 4 & 10 & 23 & 54 & 129 & 247 & 433 & 672 & \ldots \\
    8 & 0 & 0 & 0 & 0 & 0 & 0 & 0 & 4 & 10 & 24 & 46 & 96 & 201 & 397 & \ldots \\
    9 & 0 & 0 & 0 & 0 & 0 & 0 & 0 & 0 & 4 & 10 & 24 & 50 & 92 & 166 & \ldots \\
    10 & 0 & 0 & 0 & 0 & 0 & 0 & 0 & 0 & 0 & 4 & 10 & 24 & 50 & 100 & \ldots \\
    11 & 0 & 0 & 0 & 0 & 0 & 0 & 0 & 0 & 0 & 0 & 4 & 10 & 24 & 50 & \ldots \\
    12 & 0 & 0 & 0 & 0 & 0 & 0 & 0 & 0 & 0 & 0 & 0 & 4 & 10 & 24 & \ldots
  \end{tblr}
  \caption{The numbers \(\av_{n+1}^k(1324) - \av_n^k(1324)\).}
  \label{tab:1324 difference diagram}
\end{table}

\subsection{Structure of the paper}

This paper is organized as follows. In Section \ref{sec:almost decomposable}, we introduce almost-decomposability and prove that all permutations in \(\Av_n^k(1324)\) are either decomposable or almost decomposable whenever \(n \geq \frac{k+7}{2}\). In Section \ref{sec:injection}, we construct an injection \(\Av_n^k(1324) \to \Av_{n+1}^k(1324)\). The enumeration of \(\av_{n+1}^k(1324) - \av_n^k(1324)\) based on the injection is performed in Section \ref{sec:difference}. Finally, Section \ref{sec:discussion} contains a discussion of ideas to extend our method to prove more of Conjecture \ref{conj:1324anders}, reasons we have failed to do so, as well as possible improvements to the upper bound for \(L(1324)\) given by the conjecture.

\section{1324-avoiders with few inversions are almost decomposable} \label{sec:almost decomposable}

We will often work with the \emph{plots} \(\left\{(i, \pi_i) : i \in [n] \right\}\) (in cartesian coordinates) of permutations \(\pi \in \mathfrak S_n\). Inverting \(\pi\) corresponds with reflecting its plot across the line \(y = x\), and the \emph{reverse-complement} \(\rc(\pi)_i = n + 1 - \pi_{n + 1 - i}\) rotates the plot by \(180\) degrees. Both \(\pi^{-1}\) and \(\rc(\pi)\) preserve \(1324\)-avoidance and the number of inversions of \(\pi\), so these are useful operations for us.

It is also good to keep in mind the \emph{Rothe diagram} of \(\pi\), which is obtained from the plot of \(\pi\) by drawing lines to north and east from each point \((i, \pi_i)\), and marking the empty coordinate points -- these points are the inversions of \(\pi\). The following figure shows an example.

\begin{center}
  \begin{tikzpicture}[scale=0.4]
    \node at (-4.4,5) {\(245169783 \quad =\)};

    \draw[step=1, thick, \gridcolor] (0.5,0.5) grid (9.5,9.5);
    
    \foreach \x\y in {1/2,2/4,3/5,4/1,5/6,6/9,7/7,8/8,9/3}
    {
      \node at (\x,\y-0.004) {\pt};
      \draw[thick]
        (\x,\y) -- (\x,9.5)
        (\x,\y) -- (9.5,\y);
    }
    \foreach \x\y in {1/1,2/1,2/3,3/1,3/3,5/3,6/3,6/7,6/8,7/3,8/3}
      \node[dot, blue!40, minimum size=6pt] at (\x,\y) {};
  \end{tikzpicture}
\end{center}

\begin{definition}
  For \(\pi \in \mathfrak S_n\) and \(i \in [n]\), we denote by \(\pi \delete \pi_i\) the unique permutation in \(\mathfrak S_{n-1}\) that is order-isomorphic to \(\pi_1 \ldots \pi_{i-1} \pi_{i+1} \ldots \pi_n\). We say that \(\pi \delete \pi_i\) is obtained by \emph{deleting} entry \(\pi_i\) from \(\pi\). More generally, if \(S \subseteq [n]\), then \(\pi \delete S\) is the unique permutation that is order-isomorphic to the sequence obtained by removing all entries contained in \(S\) from \(\pi\).
\end{definition}

\begin{definition}\label{def:almost decomp}
  A permutation \(\pi \in \mathfrak S_n\) is called \emph{almost decomposable} if it is indecomposable, but at least one of \(\pi \delete 1\), \(\pi \delete n\), \(\pi \delete \pi_1\), \(\pi \delete \pi_n\) is decomposable.
\end{definition}

\begin{example}\label{ex:almost decomp}
  Consider the indecomposable permutation \(\pi = 245169783\). Since \(\pi \delete 1 = 13458672 = 1 \oplus 2348671\) is decomposable, \(\pi\) is almost decomposable. Note that \(\pi \delete 3 = 23415867 = 2341 \oplus 1 \oplus 312\) is also decomposable. This example shows that several of the conditions for almost-decomposability can hold at the same time; in our case, both \(\pi \delete 1\) and \(\pi \delete \pi_n\) are decomposable, whereas \(\pi \delete \pi_1\) and \(\pi \delete n\) are indecomposable.

  Almost-decomposability essentially means that deleting one of the points from the `boundary' of the plot of the permutation makes it decomposable. Here are the plots of \(\pi \delete 1\) and \(\pi \delete 3\).

  \begin{center}
    \begin{tikzpicture}[scale=0.4]
      \fill[blue!15] (0.5,1.5) rectangle (1.5,2.5);
      \fill[blue!15] (1.5,2.5) rectangle (9.5,9.5);
    
      \draw[step=1, thick, \gridcolor] (0.5,0.5) grid (9.5,9.5);
    
      \foreach \x\y in {1/2,2/4,3/5,5/6,6/9,7/7,8/8,9/3}
        \node at (\x,\y-0.004) {\pt};

      \node [red] at (4,1-0.004) {\pt};
      \node at (5,-0.5) {\(\pi \delete 1\)};
    \end{tikzpicture} \hspace{5mm}
    \begin{tikzpicture}[scale=0.4]
      \fill[blue!15] (0.5,0.5) rectangle (4.5,5.5);
      \fill[blue!15] (4.5,5.5) rectangle (5.5,6.5);
      \fill[blue!15] (5.5,6.5) rectangle (8.5,9.5);

      \draw [step=1, thick, color=\gridcolor] (0.5,0.5) grid (9.5,9.5);

      \foreach \x\y in {1/2,2/4,3/5,4/1,5/6,6/9,7/7,8/8}
        \node at (\x,\y-0.004) {\pt};

      \node [red] at (9,3-0.004) {\pt};
      \node at (5,-0.5) {\(\pi \delete 3\)};
    \end{tikzpicture}
  \end{center}
\end{example}

The following elementary result shows that only some combinations of the four conditions for almost-decomposability can hold at once. This will be important in the following section, where we construct the injection \(\Av_n^k(1324) \to \Av_{n+1}^k(1324)\).

\begin{proposition} \label{prop:forbidden removable point combinations}
  Let \(\pi \in \mathfrak S_n\) be indecomposable. If \(\pi \delete 1\) is decomposable then \(\pi \delete \pi_1\) is indecomposable, and similarly if \(\pi \delete n\) is decomposable then \(\pi \delete \pi_n\) is indecomposable.
\end{proposition}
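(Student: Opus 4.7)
The plan is to prove the statement by contradiction, exploiting the reverse-complement symmetry to halve the work. Since \(\rc\) preserves indecomposability and satisfies \(\rc(\pi \delete 1) = \rc(\pi) \delete n\) together with \(\rc(\pi \delete \pi_1) = \rc(\pi) \delete \rc(\pi)_n\), the second half of the proposition follows from the first half applied to \(\rc(\pi)\). So I would suppose that \(\pi\) is indecomposable, that \(\pi \delete 1\) is decomposable, and---for a contradiction---that \(\pi \delete \pi_1\) is also decomposable. Let \(k\) be the position with \(\pi_k = 1\), and let \(j, \ell \in \{1, \ldots, n-2\}\) be the split points of these two decompositions.

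The first step is to translate the decomposition of \(\pi \delete 1\) back to \(\pi\). If \(k \leq j\) then the first \(j\) entries of \(\pi \delete 1\) together with the deleted entry \(1\) form \(\{\pi_1, \ldots, \pi_{j+1}\} = \{1, \ldots, j+1\}\), contradicting indecomposability. Hence \(k > j\), and consequently \(\{\pi_1, \ldots, \pi_j\} = \{2, \ldots, j+1\}\); in particular \(\pi_1 \leq j+1\).

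The second step is to translate the decomposition of \(\pi \delete \pi_1\) back to \(\pi\). The defining relabeling subtracts \(1\) from each entry of \(\pi_2 \ldots \pi_n\) that exceeds \(\pi_1\), so the preimage of \(\{1, \ldots, \ell\}\) under this relabeling is \(\{1, \ldots, \ell\}\) when \(\ell < \pi_1\), and \(\{1, \ldots, \ell+1\} \delete \{\pi_1\}\) when \(\ell \geq \pi_1\). In the latter subcase, adjoining \(\pi_1\) gives \(\{\pi_1, \ldots, \pi_{\ell+1}\} = \{1, \ldots, \ell+1\}\), a decomposition of \(\pi\). In the former subcase, \(\{\pi_2, \ldots, \pi_{\ell+1}\} = \{1, \ldots, \ell\}\); combining with the first step, either \(\ell < j\), so that these entries lie in \(\{2, \ldots, j+1\} \delete \{\pi_1\}\) and cannot contain \(1\)---impossible---or \(\ell \geq j\), in which case \(\ell < \pi_1 \leq j+1\) forces \(\ell = j\) and \(\pi_1 = j+1\), and \(\{\pi_1, \ldots, \pi_{j+1}\} = \{1, \ldots, j+1\}\) decomposes \(\pi\) once more. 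All cases yield a contradiction.

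I expect the main obstacle to be not the mathematics but the bookkeeping: deletion by value (\(\delete 1\)) and deletion by position (\(\delete \pi_1\)) each induce their own relabeling, and the two must not be conflated. To keep things straight I would draw a picture in the style of Example \ref{ex:almost decomp}, with the first \(j\) columns of \(\pi\) occupying rows \(\{2, \ldots, j+1\}\) and with the value \(1\) placed strictly to the right, so that the cases in the argument correspond to easily distinguishable pictures on the plot.
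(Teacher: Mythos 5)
Your proof is correct and takes essentially the same route as the paper's: you translate the decompositions of \(\pi \delete 1\) and \(\pi \delete \pi_1\) back to \(\pi\), contradict its indecomposability, and dispatch the second statement by reverse-complement symmetry, exactly as the paper does. The paper merely compresses your case analysis on \(j\), \(\ell\) and \(k\) into the observation that decomposability of \(\pi \delete 1\) forces \(\pi_1 \leq |\pi^{(1)}|+1 < \pi^{-1}_1\), so the inverse-symmetric statement for \(\pi \delete \pi_1\), which would force the opposite inequality, cannot also hold.
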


\begin{proof}
  The two parts of the statement are symmetrical, so it suffices to prove the first one. Suppose that \(\pi \delete 1 = \pi^{(1)} \oplus \pi'\), with \(\pi^{(1)}\) an indecomposable permutation. We must have \(\pi^{-1}_1 > |\pi^{(1)}| + 1\) and \(\pi_1 \leq |\pi^{(1)}| + 1\), so that in particular \(\pi_1 < \pi^{-1}_1\). It is therefore not possible that also \(\pi \delete \pi_1\) is decomposable.
\end{proof}

In fact, the other four possible cases
\begin{equation*}
  \pi \delete 1, \pi \delete n \quad \text{and} \quad
  \pi \delete 1, \pi \delete \pi_n \quad \text{and} \quad
  \pi \delete \pi_1, \pi \delete n \quad \text{and} \quad
  \pi \delete \pi_1, \pi \delete \pi_n
\end{equation*}
can both be decomposable when \(\pi\) is indecomposable. Example \ref{ex:almost decomp} has the \(\pi \delete 1\), \(\pi \delete \pi_n\) case.

The goal of this section is to show that, up to the upper bound of \(2n-7\) inversions, every \(1324\)-avoider of length \(n\) is either decomposable or almost decomposable. (We will rewrite the bound \(n \geq \frac{k+7}{2}\) as \(k \leq 2n - 7\) from here on.) This is the structural characterization that our proof of Theorem \ref{thm:1324} relies on.

\begin{theorem} \label{thm:removepoint}
  Each indecomposable permutation \(\pi \in \Av_n^k(1324)\) with \(k \leq 2n - 7\) is almost decomposable.
\end{theorem}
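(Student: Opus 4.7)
\medskip

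\noindent\textbf{Proof plan.} I would argue the contrapositive: assume $\pi \in \mathfrak S_n$ is an indecomposable $1324$-avoider which is \emph{not} almost decomposable, and prove $\inv(\pi) \geq 2n - 6$.

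The first step is to unpack each of the four non-decomposability hypotheses concretely. Given that $\pi$ is indecomposable, a direct case analysis shows, for instance, that ``$\pi \delete 1$ indecomposable'' is equivalent to
\begin{equation*}
  \pi([j]) \neq \{2, 3, \ldots, j+1\} \quad \text{for all } 1 \leq j < \pi^{-1}_1,
\end{equation*}
and the other three deletions yield analogous conditions (with the roles of values and positions, and of $1$ and $n$, swapped). Applying these translations with appropriate extreme values of $j$ (or using the reverse-complement symmetry) would then force all four boundary entries to be strictly interior: $\pi_1, \pi^{-1}_1 \geq 3$ and $\pi_n, \pi^{-1}_n \leq n - 2$.

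With these constraints, the inversions involving the four distinguished entries (value $1$, value $n$, position $1$, position $n$) already contribute
\begin{equation*}
  (\pi_1 - 1) + (n - \pi_n) + (\pi^{-1}_1 - 1) + (n - \pi^{-1}_n) \geq 8
\end{equation*}
with multiplicity, so at least $4$ distinct inversions after removing the (at most four) overlaps. This is $O(1)$ and far short of $2n - 6$, so the bulk of the work is to extract $\Omega(n)$ further inversions from the interior of $\pi$. My plan is to scan over split positions $j = 1, \ldots, n - 1$: indecomposability at $j$ guarantees a ``crossing'' inversion (some position $\leq j$ with value $> j$, or some position $> j$ with value $\leq j$), and the four non-decomposability conditions combined with $1324$-avoidance should force these crossings to correspond to new inversions for most $j$, so that telescoping produces a lower bound of order $n$.

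The main obstacle will be this final counting step. The $1324$-avoidance condition is local (a constraint on four-entry configurations), whereas the bound $2n - 6$ is global, so making the two interact requires a careful case-based argument: probably subdividing on the relative order of $\pi_1, \pi^{-1}_1, \pi_n, \pi^{-1}_n$ (and on whether $\pi_1 > \pi_n$) and on the shape of the plot near each of the four corners, and using $1324$-avoidance to rule out ``long'' configurations that would re-use the same crossing inversion at many $j$. Moreover, the tightness of the bound $k \leq 2n - 7$, visible in Table~\ref{tab:1324 difference diagram}, leaves essentially no slack in this count, so the argument must be made sharp rather than merely asymptotic.
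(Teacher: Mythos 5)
The contrapositive framing is legitimate (and your preliminary observations are correct, modulo a boundary slip: your criterion for \(\pi \delete 1\) being indecomposable fails at \(j = n-1\) when \(\pi_n = 1\), e.g.\ \(\pi = 42531\)), but the proposal stops exactly where the theorem actually lives. Everything you establish concretely — \(\pi_1, \pi^{-1}_1 \geq 3\), \(\pi_n, \pi^{-1}_n \leq n-2\), and the \(O(1)\) inversions attached to the four boundary entries — is easy and falls far short of \(2n-6\); the remaining step is described only as a hope that scanning split positions \(j\) and telescoping ``crossing'' inversions, combined with \(1324\)-avoidance, yields \(\Omega(n)\) distinct inversions. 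No mechanism is given for why the crossings should be distinct for most \(j\), and since a single inversion can cover many splits while the extremal example \(3612745\,\ldots\) shows the bound \(2n-6\) is attained exactly, a generic covering/telescoping count has no slack to absorb losses; you acknowledge this yourself, which means the core of the proof is missing rather than merely unpolished.

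For comparison, the paper does not argue by splits at all. It first separates the case \(\pi_1 > \pi_n\) (or \(\pi^{-1}_1 > \pi^{-1}_n\)), where deleting \(\pi_1\) and \(\pi_n\) removes about \(n\) inversions and the inequality \(\comp + \inv \geq |\cdot|\) applied to the shortened permutation already forces \(\pi \delete \pi_1\) or \(\pi \delete \pi_n\) to be decomposable (Lemma \ref{lem:sides decomposable}). In the main case \(\pi_1 < \pi_n\), \(\pi^{-1}_1 < \pi^{-1}_n\), indecomposability plus \(1324\)-avoidance produces a point in the northwestern or southeastern region (Lemma \ref{lem:at least one corner}); then the inversion counting is done only for a handful of specific indices (the extremal points and points in these regions), with \(1324\)-avoidance used to show that certain regions of the plot are empty, yielding the sharp thresholds \(2n-5\), \(2n-6\), \(2n-4\) (Lemmas \ref{lem:not both corners}--\ref{lem:no doublecorner and both sides}). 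Crucially, the conclusion is then not another inversion count but a direct structural one: the emptiness of those regions forces a prefix of \(\pi \delete 1\) (or of \(\pi \delete \pi_n\)) to be a block, i.e.\ decomposability of a boundary deletion (Lemma \ref{lem:corner decomposable}). Your plan would need to reconstruct precisely this region-by-region analysis to close the gap; as written, the decisive counting step is absent.
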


\begin{proof}
  If \(\pi_1 < \pi_n\) and \(\pi^{-1}_1 < \pi^{-1}_n\), then Lemmas \ref{lem:at least one corner} and \ref{lem:corner decomposable} below show that \(\pi\) is almost decomposable. Otherwise Lemma \ref{lem:sides decomposable} applies. 
\end{proof}

All of the facts needed for the above proof are obtained by counting inversions in a specific way, thus showing that all permutations with certain properties violate the bound \(k \leq 2n - 7\). We will need to refer to certain `regions' in the plots of our permutations. To start with, if a permutations \(\pi\) satisfies \(\pi_1 < \pi_n\) and \(\pi^{-1}_1 < \pi^{-1}_n\), then any entry \(\pi_i\) such that
\begin{equation*}
  i < \pi^{-1}_1 \qquad \text{and} \qquad \pi_i > \pi_n
\end{equation*}
is said to \emph{lie in the northwestern region of \(\pi\)}, and if instead
\begin{equation*}
  i > \pi^{-1}_n \qquad \text{and} \qquad \pi_i < \pi_1
\end{equation*}
then \(\pi_i\) \emph{lies in the southeastern region of \(\pi\)}. See Figure \ref{fig:corner regions} for a visualization.

\begin{figure}[htb]
  \begin{minipage}[t]{0.5\textwidth}\vspace{0pt}
    \begin{tikzpicture}[scale=0.63]
      \fill[blue!40] (0,6) rectangle (3,9);
      \fill[red!40] (6,0) rectangle (9,3);
      \draw (0,0) rectangle (9,9);
      \foreach \x\y in {0/3,3/0,9/6,6/9}
      {
        \node at (\x,\y-0.003) {\pt};
        \draw 
          (\x,0) -- (\x,9)
          (0,\y) -- (9,\y);
      }
      \node at (-0.65,2.95) {\(\pi_1\)};
      \node at (3,-0.6) {\(1\)};
      \node at (9.7,5.95) {\(\pi_n\)};
      \node at (6,9.6) {\(n\)};
    \end{tikzpicture}
  \end{minipage}\hfill
  \begin{minipage}[t]{0.5\textwidth}\vspace{1.2em}
    \caption{The northwestern and southeastern regions of a permutation \(\pi\), colored blue and red, respectively.}
    \label{fig:corner regions}
  \end{minipage}
\end{figure}
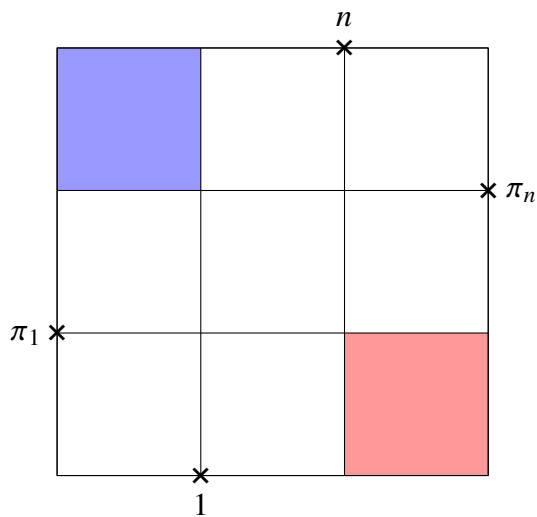

\begin{lemma} \label{lem:at least one corner}
  Suppose \(\pi \in \Av_n(1324)\) satisfies \(\pi_1 < \pi_n\) and \(\pi^{-1}_1 < {\pi^{-1}_n}\). If \(\pi\) is indecomposable, it must have a point in its northwestern or southeastern region.
\end{lemma}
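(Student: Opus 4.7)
The plan is to argue by contradiction. Assume $\pi \in \Av_n(1324)$ is indecomposable with $\pi_1 < \pi_n$ and $\pi^{-1}_1 < \pi^{-1}_n$, yet has no point in either the NW or SE region. Set $c = \pi_1$, $d = \pi_n$, $a = \pi^{-1}_1$, $b = \pi^{-1}_n$. Indecomposability rules out $\pi_1 = 1$ and $\pi_n = n$, so $a, c \geq 2$ and $b, d \leq n - 1$. I will produce an $m \in [n-1]$ with $\pi([m]) = [m]$, which will contradict indecomposability.

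The first step extracts two monotonicity consequences of $1324$-avoidance. Since $\pi_a = 1$ and $\pi_b = n$, any inversion $\pi_i > \pi_j$ with $a < i < j < b$ would extend to a $1324$ pattern at positions $(a, i, j, b)$; hence $\pi_{a+1}, \ldots, \pi_{b-1}$ is strictly increasing. The analogous argument with outer positions $(1, n)$ forces all values in the open interval $(c, d)$ to appear in increasing order throughout $\pi$.

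Emptiness of NW and SE means every value at a position below $a$ lies in $\{2, \ldots, d-1\}$, and every value at a position above $b$ lies in $\{c+1, \ldots, n-1\}$. In particular, the ``small'' values $\{2, \ldots, c-1\}$ appear only at positions in $\{2, \ldots, a-1\}$ or in the middle block $M := \{a+1, \ldots, b-1\}$; write $L'$ and $\lambda_M$ for the resulting subsets and set $\ell = |\lambda_M|$, so that $|L'| + \ell = c - 2$. Let $|V_L|$ be the number of $(c,d)$-values at positions below $a$; by the global monotonicity of the $(c,d)$-values, these must be the smallest such values, namely $\{c+1, \ldots, c + |V_L|\}$. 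Counting the values at positions $2, \ldots, a - 1$ yields $|L'| + |V_L| = a - 2$, and subtracting the two count identities gives $a + \ell = c + |V_L|$.

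Finally, because $\pi|_M$ is increasing and because the values of $\lambda_M$, the $(c,d)$-values in $M$, and the values of $M \cap \{d+1, \ldots, n-1\}$ form three size-separated blocks, the $\ell$ values of $\lambda_M$ must occupy exactly the positions $a + 1, \ldots, a + \ell$. Hence the first $a + \ell$ positions of $\pi$ contain precisely $\{1, c\} \cup L' \cup V_L \cup \lambda_M = \{1, \ldots, c + |V_L|\} = \{1, \ldots, a + \ell\}$, so $\pi$ decomposes at $m = a + \ell$; since $a + \ell \leq b - 1 \leq n - 2$ this is a legal decomposition index, contradicting indecomposability. I expect the trickiest part to be the bookkeeping --- particularly identifying $V_L$ with the initial segment of the increasing $(c,d)$-value sequence and verifying the clean size-split of the middle values --- but both follow directly from the two monotonicity consequences of $1324$-avoidance extracted in the first step.
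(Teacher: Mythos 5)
Your proof is correct, and at its core it is the same argument as the paper's, just executed with more bookkeeping. Both proofs proceed by contradiction and exhibit a value-closed proper prefix; in fact your split index $a+\ell$ is exactly the paper's choice of $m$, the largest index with $\pi_m < \pi_1$ (the positions carrying values smaller than $\pi_1$ are $a$, the positions of $L'$, and $a+1,\dots,a+\ell$). The paper verifies that this prefix is a permutation directly: any violation produces either the pattern $1\,\pi_i\,\pi_m\,n$ (anchored at the positions of $1$ and $n$) or the pattern $\pi_1\,\pi_i\,\pi_j\,\pi_n$ (anchored at the values $\pi_1,\pi_n$), which are precisely the two monotonicity consequences you extract in your first step. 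Where you differ is in the verification: instead of the pointwise domination argument, you globalize the two monotonicity facts, use emptiness of the NW/SE regions to locate the small values and the $(c,d)$-values, and then identify the cut via the counting identities $|L'|+\ell=c-2$ and $|L'|+|V_L|=a-2$. This is sound (all the auxiliary claims -- $V_L$ being an initial segment of $\{c+1,\dots,d-1\}$, $\lambda_M$ sitting at positions $a+1,\dots,a+\ell$, and $a+\ell\le b-1\le n-2$ -- check out), but it buys nothing over the paper's shorter direct argument.
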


\begin{proof}
  Suppose this is not the case, and let \(m\) be the largest index such that \(\pi_m < \pi_1\). We will show that \(\pi_1 \ldots \pi_m\) is a permutation, and therefore that \(\pi\) is decomposable. Indeed, we must have \(\pi_i < \pi_m\) for all \(\pi_1^{-1} < i < m\), as otherwise \(1 \pi_i \pi_m n\) forms a \(1324\)-pattern. In particular \(\pi_i < \pi_n\) for all \(i \leq m\), and therefore \(\pi_i < \pi_j\) for all \(i \leq m\) and \(j > m\); otherwise \(\pi_1 \pi_i \pi_j \pi_n\) is an occurrence of \(1324\).
\end{proof}

A point in the northwestern or southeastern region intuitively causes many inversions. Indeed, we will show that a \(1324\)-avoider with at most \(2n-6\) inversions can have points in only one of the two regions. 

For the remaining results, it will be convenient to make a distinction between inversions of the form \((j,i)\) and \((i,j)\) for a given index \(i\). The former will be called \emph{left-inversions} of index \(i\), and the latter \emph{right-inversions}. In the Rothe diagram of \(\pi\), left-inversions are located to the left of the point \((i,\pi_i)\), and right-inversions are below it.

\begin{lemma} \label{lem:not both corners}
   If \(\pi \in \Av_n^k(1324)\) with \(k \leq 2n-6\), \(\pi_1 < \pi_n\) and \(\pi^{-1}_1 < {\pi^{-1}_n}\), then either the northwestern or the southeastern region of \(\pi\) contains no points.
\end{lemma}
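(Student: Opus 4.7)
The plan is to argue by contradiction. Suppose both regions contain a point, with a northwestern point at position $p$ of value $v=\pi_p$ and a southeastern point at position $q$ of value $w=\pi_q$. Setting $x=\pi_1$, $y=\pi_n$, $a=\pi^{-1}_1$, and $b=\pi^{-1}_n$, the hypotheses yield the strict chains $1<p<a<b<q<n$ and $1<w<x<y<v<n$, so these six ``landmark'' positions are pairwise distinct. The goal is to show that $\inv(\pi)\geq 2n-5$, contradicting $k\leq 2n-6$.

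First I would count the \emph{corner inversions}---those meeting at least one of the four positions $1,a,b,n$. Summing the right-inversions of $1$ (namely $x-1$), the left-inversions of $a$ ($a-1$), the right-inversions of $b$ ($n-b$), and the left-inversions of $n$ ($n-y$), and subtracting the two double counts $(1,a)$ and $(b,n)$ (all other pairs of corners fail to be inversions by the chains above), yields a total of $x+a+2n-b-y-4$ corner inversions.

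Next I would bound the non-corner inversions meeting $p$ or $q$. A direct count shows that for any position $s$ with $u=\pi_s$, among the $u-1$ values below $u$ exactly $s-1$ minus the number of left-inversions at $s$ lie at positions $<s$; hence the total number of inversions meeting $s$ is at least $u-s$. This supplies at least $v-p\geq y-a+2$ inversions at $p$ and $q-w\geq b-x+2$ at $q$, using $v\geq y+1$, $p\leq a-1$, $q\geq b+1$ and $w\leq x-1$. Inspecting the eight landmark pairs that involve $p$ or $q$ and a corner shows that exactly four of them are inversions, namely $(p,a)$, $(p,n)$, $(1,q)$, $(b,q)$; and the unique non-corner inversion meeting both $p$ and $q$ is $(p,q)$. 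Inclusion--exclusion therefore gives at least
\[ (y-a+2)+(b-x+2)-4-1 = y+b-a-x-1 \]
non-corner inversions meeting $p$ or $q$.

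Summing the two disjoint contributions yields
\[ \inv(\pi)\geq(x+a+2n-b-y-4)+(y+b-a-x-1)=2n-5, \]
the desired contradiction. The only subtlety is the bookkeeping of which landmark pairs are (or are not) inversions, which is routine given the strict chains above. Notably, 1324-avoidance itself plays no role in this particular lemma.
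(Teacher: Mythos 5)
Your argument is correct and follows essentially the same route as the paper: both proofs count inversions incident to the same six landmark points \(1\), \(\pi^{-1}_1\), \(\pi^{-1}_n\), \(n\) and the two regional points, correct for the same few double-counted pairs, and reach \(\inv(\pi) \geq 2n-5\), contradicting \(k \leq 2n-6\). Your closing remark is also accurate: the paper invokes \(1324\)-avoidance only to make the count of right-inversions at the northwestern point exact, whereas the trivial inequality suffices, so avoidance is indeed dispensable in this lemma.
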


\begin{proof}
  Suppose \(\pi\) has a points \(\pi_i\) and \(\pi_j\) in the northwestern and southeastern regions, respectively. The index \(i\) has  \(\pi_i - i\) right-inversions, since if an index \(k < i\) has \(\pi_k > \pi_i\) then \(\pi_1 \pi_k \pi_i n\) forms a \(1324\)-pattern. Furthermore, indices \(\pi^{-1}_1\) and \(n\) have \(\pi^{-1}_1 - 1\) and \(n - \pi_n\) left-inversions, respectively. Of these inversions, \((i,\pi^{-1}_1)\) and \((i,n)\) were counted twice, so adding them up we get at least
  \begin{equation*}
    \pi_i - i + \pi^{-1}_1 - 1 + n - \pi_n - 2 = n + {\underbrace{\pi^{-1}_1 - i}_{> 0}} + {\underbrace{\pi_i - \pi_n \vphantom{\pi^{-1}_1}}_{> 0}} - 3 \geq n - 1.
  \end{equation*}
  Similarly, counting left-inversions of index \(1\), right-inversions of index \(j\), and left-inversions of index \(\pi^{-1}_n\) gives another \(n-1\), out of which \((1, \pi^{-1}_1)\), \((i,j)\) and \((\pi^{-1}_n, n)\) were counted twice. In total,
  \begin{equation*}
    \inv(\pi) \geq 2(n-1) - 3 = 2n - 5.
  \end{equation*}
  The plot of \(\pi\) is illustrated in Figure \ref{fig:not both corners}. Points whose right-inversions are counted are colored blue, and points whose left-inversions are counted are colored red. The vertical blue rays indicate the possible positions of right-inversions of the corresponding point in the Rothe diagram of \(\pi\); horizontal red rays contain the left-inversions. Intersections of red and blue rays correspond with double-counted inversions. The northwestern and southeastern regions are colored gray.
\end{proof}

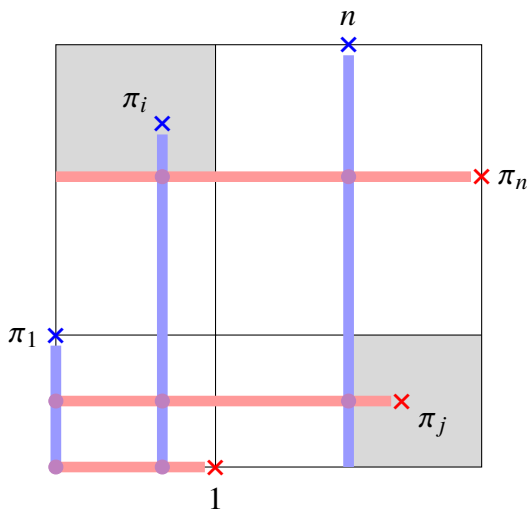
\begin{figure}[htb]
  \begin{minipage}[t]{0.5\textwidth}\vspace{0pt}
    \begin{tikzpicture}[scale=0.7]
      \fill[black!15] (1,6.5) rectangle (4,9);
      \fill[black!15] (6.5,1) rectangle (9,3.5);
  
      \draw (1,1) rectangle (9,9);
      
      \draw
        (1,3.5) -- (9,3.5)
        (4,1) -- (4,9);
  
      \foreach \x\y in {1/3.5,3/7.5,6.5/9}
      {
        \draw[line width=4pt, blue!40] 
          (\x,\y-0.2) -- (\x,1);
        \node[blue] at (\x,\y-0.003) {\pt};
      }
      \foreach \x\y in {4/1,7.5/2.25,9/6.5}
      {
        \draw[line width=4pt, red!40] 
          (\x-0.2,\y) -- (1,\y);
        \node[red] at (\x,\y-0.003) {\pt};
      }
      \foreach \x\y in {1/1,1/2.25,3/1,3/2.25,3/6.5,6.5/2.25,6.5/6.5}
        \node[dot, violet!50, minimum size=5.25pt] at (\x,\y) {};
  
      \node at (0.4,3.45) {\(\pi_1\)};
      \node at (2.5,7.9) {\(\pi_i\)};
      \node at (6.5,9.5) {\(n\)};
      \node at (4,0.4) {\(1\)};
      \node at (8.1,1.8) {\(\pi_j\)};
      \node at (9.6,6.45) {\(\pi_n\)};
    \end{tikzpicture}
  \end{minipage}\hfill
  \begin{minipage}[t]{0.5\textwidth}\vspace{1.2em}
    \caption{Counting inversions in a permutation with points in both the nortwestern and the southeastern region. Proof of Lemma \ref{lem:not both corners}.}
    \label{fig:not both corners}
  \end{minipage}
\end{figure}

Lemmas \ref{lem:at least one corner} and \ref{lem:not both corners} together prove that if \(\pi \in \Av_n^k(1324)\) is indecomposable, \(k \leq 2n-6\), \(\pi_1 < \pi_n\) and \(\pi^{-1}_1 < \pi^{-1}_n\), then \(\pi\) has a point either in the northwestern or the southeastern region, but not in both. Observe that \(\pi_i\) lies in the northwestern region of \(\pi\) if and only if \(i\) lies in the southeastern region of \(\pi^{-1}\), so it always suffices to examine only one of the two cases.

Our following result will explain why \(k \leq 2n-7\) is the best possible upper bound for our method. We introduce some more terminology based on Figure \ref{fig:side regions}. Suppose \(\pi\) has a point \(\pi_i\) in the northwestern region. A point \(\pi_j\) satisfying
\begin{equation*}
  i < j < \pi^{-1}_n \qquad \text{and} \qquad 1 < \pi_j < \pi_1,
\end{equation*}
is said to \emph{lie in the southern region in relation to \(\pi_i\)}. If
\begin{equation*}
  \pi^{-1}_n < j < n \qquad \text{and} \qquad \pi_1 < \pi_j < \pi_n,
\end{equation*}
then we say that \(\pi_j\) \emph{lies in the eastern region in relation  to \(\pi_i\)}. The points \(1\) and \(\pi_n\) are excluded from these regions. Lastly, notice that if \(\pi_j\) satisfies
\begin{equation*}
    i < j < \pi^{-1}_n \qquad \text{and} \qquad \pi_1 < \pi_j < \pi_n,
\end{equation*}
then \(\pi_1 \pi_i \pi_j n\) forms a \(1324\) pattern. This is why the central yellow region in Figure \ref{fig:side regions} must be empty if \(\pi\) avoids \(1324\).

\begin{figure}[htb]
  \begin{minipage}[t]{0.5\textwidth}\vspace{0pt}
    \begin{tikzpicture}[scale=0.7]
      \fill[blue!40] (3,1) rectangle (7,3);
      \fill[red!40] (7,3) rectangle (9,7);
      \fill[yellow!40] (3,3) rectangle (7,7);
  
      \draw (1,1) rectangle (9,9);
  
      \foreach \x\y in {1/3,3/7,7/9,4.5/1,9/5}
      {
        \draw 
          (\x,1) -- (\x,\y) -- (9,\y);
        \node at (\x,\y-0.003) {\pt};
      }
      \node at (0.4,2.95) {\(\pi_1\)};
      \node at (2.5,7.4) {\(\pi_i\)};
      \node at (7,9.5) {\(n\)};
      \node at (4.5,0.4) {\(1\)};
      \node at (9.6,4.95) {\(\pi_n\)};
    \end{tikzpicture}
  \end{minipage}\hfill
  \begin{minipage}[t]{0.5\textwidth}\vspace{1.2em}
    \caption{The southern region (in blue) and eastern region (in red) in relation to the northwestern point \(\pi_i\) of a permutation \(\pi\). The central yellow region is empty if \(\pi\) avoids \(1324\).}
    \label{fig:side regions}
  \end{minipage}
\end{figure}
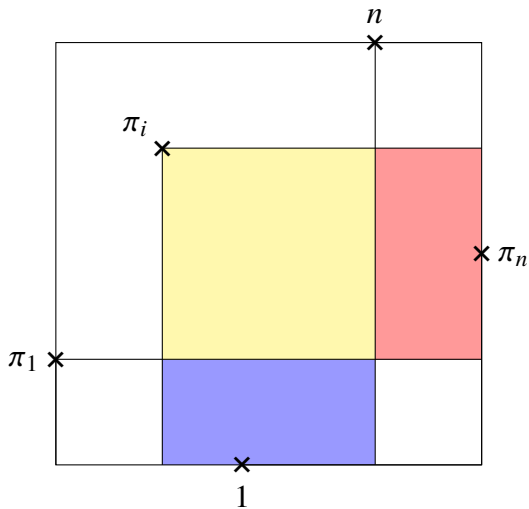

\begin{lemma} \label{lem:no corner and both sides}
  If \(\pi \in \Av_n^k(1324)\), \(k \leq 2n-7\), with \(\pi_1 < \pi_n\) and \(\pi^{-1}_1 < \pi^{-1}_n\) has a point \(\pi_i\) in the northwestern region, then there cannot exist two points \(\pi_{j_1}\) and \(\pi_{j_2}\) that lie in the southern and eastern regions in relation to \(\pi_i\), respectively.
\end{lemma}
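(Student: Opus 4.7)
The plan is to argue by contradiction: assume such $\pi_{j_1}$ and $\pi_{j_2}$ exist in addition to the northwestern point $\pi_i$, and show that $\pi$ must have at least $2n-6$ inversions, contradicting $\inv(\pi) = k \leq 2n-7$. The strategy extends the inversion-counting method of Lemma~\ref{lem:not both corners} by singling out the seven positions $K = \{1, i, \pi_1^{-1}, j_1, \pi_n^{-1}, j_2, n\}$. These are all distinct (in particular $i \neq 1$, since $\pi_i > \pi_n > \pi_1$ precludes $\pi_i = \pi_1$), and the argument splits into two cases according to whether $\pi_1^{-1} < j_1$ or $\pi_1^{-1} > j_1$ inside the interval $(i, \pi_n^{-1})$.

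The inversions of $\pi$ are then counted in two parts. First, the relative value order at the seven positions in $K$ is fixed by the region definitions (namely $1 < \pi_{j_1} < \pi_1 < \pi_{j_2} < \pi_n < \pi_i < n$), so a direct enumeration of inversions among pairs of positions in $K$ yields $8$ in the first case and $9$ in the second. Second, I would claim that every position $k \in [n] \setminus K$ forms at least two inversions with positions in $K$. Since $|K| = 7$, summing these bounds gives
\[ \inv(\pi) \;\geq\; 8 + 2(n-7) \;=\; 2n - 6, \]
as required.

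The main difficulty, and the step I expect to be the real obstacle, is establishing this uniform external bound of two inversions per $k \notin K$. I would handle it by a case analysis on which of the six intervals between consecutive positions of $K$ contains $k$, using 1324-avoidance to constrain $\pi_k$ in each. The argument in the proof of Lemma~\ref{lem:not both corners} immediately gives $\pi_k < \pi_i$ whenever $k < i$; applying the same 1324-pattern idea at positions $(1, i, k, \pi_n^{-1})$ rules out $\pi_k \in (\pi_1, \pi_i)$ for $i < k < \pi_n^{-1}$; at positions $(1, k, j_2, n)$ it rules out $\pi_k \in (\pi_{j_2}, \pi_n)$ for $\pi_n^{-1} < k < j_2$; and at $(1, j_2, k, n)$ or $(\pi_1^{-1}, j_2, k, n)$ (according to whether $\pi_k > \pi_1$ or $\pi_k < \pi_1$) it forces $\pi_k > \pi_{j_2}$ for $j_2 < k < n$. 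In each of the resulting sub-cases, a direct comparison of values shows that at least two of the pairs $(1,k)$, $(i,k)$, $(k,\pi_1^{-1})$, $(k,j_1)$, $(\pi_n^{-1},k)$, $(k,j_2)$, $(k,n)$ are inversions, giving the needed bound.
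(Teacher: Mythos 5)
Your argument is correct, and it is a genuinely different way of organizing the count than the paper's, even though both proofs revolve around the same seven distinguished positions \(1, i, \pi_1^{-1}, j_1, \pi_n^{-1}, j_2, n\). The paper sums the right-inversions of \(1, i, \pi_n^{-1}\) and the left-inversions of \(\pi_1^{-1}, j_1, j_2, n\), corrects for eight double-counted pairs, and closes the estimate with coordinate inequalities — in particular it bounds the \(\pi_i - i\) right-inversions of \(i\) by the capacity \(\pi_1 - 1 + n - \pi_n^{-1}\) of the southern and eastern regions (using Lemma \ref{lem:not both corners} to empty the southeastern region) and uses that \(j_1\) and \(j_2\) are forced to have at least \(i\) and \(n - \pi_i + 1\) left-inversions, arriving at \(\inv(\pi) \geq 2n-6\). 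You instead count all inversions incident to your set \(K\): the seven points themselves realize the pattern \(3612745\) or \(3621745\) (so at least \(8\) internal inversions — note these are exactly the paper's extremal examples with \(2n-6\) inversions), and each outside point contributes at least two inversions with \(K\), giving \(8 + 2(n-7) = 2n-6\). I checked your uniform external bound: in each interval the \(1324\)-applications you list are exactly the needed constraints (for \(k<i\) one gets \(\pi_k < \pi_i\); for \(i<k<\pi_n^{-1}\) the value \(\pi_k\) is pushed below \(\pi_1\) or above \(\pi_i\); for \(\pi_n^{-1}<k<j_2\) it avoids \((\pi_{j_2},\pi_n)\); for \(j_2<k<n\) it exceeds \(\pi_{j_2}\)), and in every resulting sub-case two of your listed pairs are indeed inversions, with equality throughout precisely for \(3612\,7\ldots n\,45\). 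So the sketch is completable as stated; its cost is a longer but routine interval-by-interval case check (which you should write out in full), and its benefits are that it makes the tightness of the bound at \(3612745\) completely transparent and does not need the southeastern region to be empty, i.e.\ it uses only the pattern trick from the proof of Lemma \ref{lem:not both corners} rather than its statement. Two small points to make explicit when writing it up: all seven positions are distinct (e.g.\ \(\pi_1 > \pi_{j_1} > 1\) forces \(\pi_1^{-1} \neq 1, j_1\), and \(i < \pi_1^{-1} < \pi_n^{-1}\) forces \(\pi_i \neq n\)), and the internal count is at least \(8\) in both orderings of \(\pi_1^{-1}\) and \(j_1\), which is all you need.
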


\begin{proof}
Suppose that such indices \(j_1\) and \(j_2\) exist. See Figure \ref{fig:no corner and both sides} for the plot of \(\pi\). There can be no point in the southeastern region of \(\pi\) by Lemma \ref{lem:not both corners}, and, as discussed above, the `central' region in relation to \(\pi_i\) must be empty. It follows that each right-inversion of index \(i\) -- except for \((i, \pi^{-1}_1)\) and \((i, n)\) -- is caused by a point in the southern or eastern regions in relation to \(\pi_i\). By counting the maximal number of points that can lie in these regions (and including \(1\) and \(\pi_n\)), we find that the number \(\pi_i - i\) of right-inversions of index \(i\) is at most \(\pi_1 - 1 + n - \pi^{-1}_n\).

Furthermore, index \(j_1\) has at least \(i\) left-inversions, since if \(j < i\) and \(\pi_j < \pi_{j_1}\), then \(\pi_j \pi_i \pi_{j_1} n\) forms a \(1324\)-pattern. With a symmetrical argument, \(j_2\) has at least \(n - \pi_i + 1\) left-inversions. Adding together the right-inversions of indices \(i\), \(1\) and \(\pi^{-1}_n\) with the left-inversions of indices \(\pi^{-1}_1\), \(j_1\), \(j_2\) and \(n\), accounting for the double counting of \((1,\pi^{-1}_1)\), \((1,j_1)\), \((i,\pi^{-1}_1)\), \((i,j_1)\), \((j_1,j_2)\), \((i,n)\), \((\pi^{-1}_n,j_1)\) and \((\pi^{-1}_n,n)\), we finally get
\begin{align*}
  \inv(\pi) &\geq 
    \pi_i - i \ \ +\ \ 
    \underbrace{\pi_1 - 1 \ \ +\ \   
    n - \pi^{-1}_n}_{\geq \pi_i - i} \\
  &\quad +\ \   
    {\underbrace{\pi^{-1}_1 - 1}_{\geq i + 1}} \ \ +\ \  
    i \ \ +\ \  
    n - \pi_i + 1 \ \ +\ \  
    {\underbrace{n - \pi_n \vphantom{\pi^{-1}_1}}_{\geq n - \pi_i + 1}}
    \ \ -\ \  8 \\
  &\geq 2(n + i - \pi_i + 1) + 2(\pi_i - i) - 8 \\
  &= 2n - 6. \qedhere
\end{align*}
\end{proof}

\begin{figure}[htb]
  \begin{minipage}[t]{0.5\textwidth}{\vspace{0pt}}
    \begin{tikzpicture}[scale=0.7]
      \fill[black!15] (3,1) rectangle (7,3);
      \fill[black!15] (7,3) rectangle (9,7);
  
      \draw (1,1) rectangle (9,9);
  
      \foreach \x\y in {1/3,3/7,7/9}
      {
        \draw 
          (\x,\y) -- (9,\y);
        \draw[line width=4pt, blue!40] 
          (\x,\y-0.2) -- (\x,1);
        \node[blue] at (\x,\y-0.003) {\pt};
      }
      \foreach \x\y in {5/1,4/2,8/4.2,9/6}
      {
        \draw[line width=4pt, red!40] 
          (\x-0.2,\y) -- (1,\y);
        \node[red] at (\x,\y-0.003) {\pt};
      }
      \foreach \x\y in {1/1,1/2,3/1,3/2,3/4.2,3/6,7/4.2,7/6}
        \node[dot, violet!50, minimum size=5.25pt] at (\x,\y) {};
  
      \node at (8,2) {\(\emptyset\)};
      \node at (5,5) {\(\emptyset\)};
      \node at (0.4,2.95) {\(\pi_1\)};
      \node at (2.5,7.4) {\(\pi_i\)};
      \node at (7,9.5) {\(n\)};
      \node at (5,0.4) {\(1\)};
      \node at (4.7,1.9) {\(\pi_{j_1}\)};
      \node at (8,4.7) {\(\pi_{j_2}\)};
      \node at (9.6,5.95) {\(\pi_n\)};
    \end{tikzpicture}
  \end{minipage}\hfill
  \begin{minipage}[t]{0.5\textwidth}{\vspace{1.2em}}
    \caption{Counting inversions of a permutation with points in both the southern and eastern regions in relation to a point \(\pi_i\) in the northwestern region. Proof of Lemma \ref{lem:no corner and both sides}.}
    \label{fig:no corner and both sides}
  \end{minipage}
\end{figure}

The bound \(k \leq 2n - 7\) is the best possible, since e.g.\ 
\begin{equation*}
  \pi = 3612\ \ 7\ldots n\ \ 45
\end{equation*}
is \(1324\)-avoiding with \(2n-6\) inversions, and \(i = 2\), \(j_1 = 4\), \(j_2 = n-1\) satisfy the conditions of Lemma \ref{lem:no corner and both sides}. One can also see that \(\pi\) is neither decomposable nor almost decomposable. Here is the plot of the first such counterexample.

\begin{center} \noindent
  \begin{tikzpicture}[scale=0.4]
    \node at (-3.9,4) {\(3612745 \quad = \)};
    \draw [step=1, thick, \gridcolor] (0.5,0.5) grid (7.5,7.5);
    \foreach \x\y in {1/3,2/6,3/1,4/2,5/7,6/4,7/5}
      \node at (\x,\y-0.004) {\pt};
    \foreach \x\y in {1/1,1/2,2/1,2/2,2/4,2/5,5/4,5/5}
      \node[dot, blue!40, minimum size=6pt] at (\x,\y) {};
  \end{tikzpicture}
\end{center}

Lemma \ref{lem:no corner and both sides} is sufficient for permutations with only one point in the northwestern region, but not strong enough to give almost-decomposability for permutations with several points in the northwestern region. However, as the following result shows, such a condition is even more limiting.

\begin{lemma} \label{lem:no doublecorner and both sides}
  Let \(\pi \in \Av_n^k(1324)\) with \(k \leq 2n - 7\), \(\pi_1 < \pi_n\), \(\pi^{-1} < \pi^{-1}_n\), and two points \(\pi_{i_1} < \pi_{i_2}\) in the northwestern region. There cannot exist two points \(\pi_{j_1}\) and \(\pi_{j_2}\) that lie in the southern region in relation to \(\pi_{i_1}\) and the eastern region in relation to \(\pi_{i_2}\), respectively.
\end{lemma}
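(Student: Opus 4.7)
My plan is to mimic the inversion-counting argument of Lemma \ref{lem:no corner and both sides}, applied with the lower-value northwestern point $\pi_{i_1}$ playing the role of $\pi_i$. The eastern region is defined without reference to any particular NW point, and $\pi_{j_1}$ lies in the southern region relative to $\pi_{i_1}$ by hypothesis, so the combinatorial setup of the earlier proof (with $i = i_1$) is already in place: by Lemma \ref{lem:not both corners} the southeastern region is empty, and by 1324-avoidance the "central" yellow region in relation to $\pi_{i_1}$ is empty as well.

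I would then count the same seven groups of inversions as in the proof of Lemma \ref{lem:no corner and both sides}: right-inversions of $i_1$, $1$, and $\pi^{-1}_n$, plus left-inversions of $\pi^{-1}_1$, $j_1$, $j_2$, and $n$, lower-bounding the left-inversions of $j_1$ by $i_1$ (using the forbidden pattern $\pi_j \pi_{i_1} \pi_{j_1} n$ for $j < i_1$) and those of $j_2$ by $n - \pi_{i_1} + 1$ using the symmetric argument. After subtracting the eight double-counted pairs, the total is at least $2n - 6 > 2n - 7 \geq k$, yielding the desired contradiction.

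The main obstacle is confirming that every region argument from the proof of Lemma \ref{lem:no corner and both sides} transfers cleanly in the presence of the extra NW point $\pi_{i_2}$. In particular, one must check that $\pi_{i_2}$ does not contribute unaccounted right-inversions to $i_1$ (this is prevented by the hypothesis $\pi_{i_1} < \pi_{i_2}$, so $(i_1, i_2)$ is not an inversion when $i_1 < i_2$), and that $\pi_{i_2}$ does not occupy a cell that had been declared empty by 1324-avoidance. Should this direct transfer fail to reach $2n-6$ in some positional configuration, the fallback is to locate an extra inversion genuinely involving $\pi_{i_2}$ — for instance $(i_2, i_1)$ when $i_2 < i_1$, or one of $(i_2, j_1)$, $(i_2, \pi^{-1}_n)$, $(i_2, j_2)$, $(i_2, n)$ in the other case — that is not among the eight double-counted pairs, pushing the count to $2n - 5$ and giving the contradiction with room to spare.
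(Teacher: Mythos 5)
Your proof covers only the degenerate reading of the lemma and misses the case it was written for. The crux is that the eastern region \emph{does} depend on the reference northwestern point: although the displayed condition in the text reads \(\pi_1 < \pi_j < \pi_n\), Figure \ref{fig:side regions}, Figure \ref{fig:no doublecorner and both sides} and the way the lemma is used in Lemma \ref{lem:corner decomposable} (where one needs emptiness of the eastern region relative to \(\pi_{i_m}\), the \emph{largest} northwestern point, after emptiness relative to \(\pi_{i_1}\) is already known) show that the intended condition is \(\pi_1 < \pi_j < \pi_i\). Under your reading the eastern regions relative to \(\pi_{i_1}\) and \(\pi_{i_2}\) coincide, so the statement would be an immediate corollary of Lemma \ref{lem:no corner and both sides} and would need no separate proof; under the intended reading the substantive case is \(\pi_{i_1} < \pi_{j_2} < \pi_{i_2}\) (so \(\pi_{j_2}\) may even exceed \(\pi_n\)), and there your transfer of the earlier count with \(i = i_1\) breaks at a specific step: the bound ``left-inversions of \(j_2\) at least \(n - \pi_{i_1} + 1\)'' relies on \(\pi_1\,\pi_{i_1}\,\pi_{j_2}\,v\) forming a \(1324\) whenever a value \(v > \pi_{i_1}\) occurs after position \(j_2\), which requires \(\pi_{j_2} < \pi_{i_1}\). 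Your fallback of locating one extra inversion involving \(\pi_{i_2}\) cannot repair this, because what is lost is an entire block of roughly \(n - \pi_{i_1}\) certified inversions, not a single pair. (Your worry about \(i_2 < i_1\) is moot: two northwestern points of a \(1324\)-avoider with \(\pi_1 < \pi_n\) and \(\pi^{-1}_1 < \pi^{-1}_n\) must appear in increasing order, else \(\pi_1\,\pi_{i_2}\,\pi_{i_1}\,n\) is a \(1324\).)

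The paper handles the hard case by restructuring the count around \emph{both} northwestern points: applying Lemma \ref{lem:no corner and both sides} twice shows the eastern region relative to \(\pi_{i_1}\) and the southern region relative to \(\pi_{i_2}\) are empty, so the points causing right-inversions of \(i_1\) and of \(i_2\) (other than \(1\) and \(\pi_n\)) lie in two disjoint strips, yielding \(\pi_{i_1} - i_1 + \pi_{i_2} - i_2 - 2 \leq \pi_1 - 1 + n - \pi^{-1}_n\); the left-inversions of \(j_2\) are bounded below by \(n - \pi_{i_2} + 1\) (legitimate since \(\pi_{j_2} < \pi_{i_2}\)), the loss is compensated by also counting the right-inversions of \(i_2\), and the inequalities \(\pi_{i_1} - i_1 \geq 3\), \(\pi_{i_2} - i_2 \geq 3\) give \(\inv(\pi) \geq 2n - 4\), contradicting \(k \leq 2n-7\). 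To salvage your approach you would need essentially this modification, not a one-inversion patch.
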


\begin{proof}
  Lemma \ref{lem:no corner and both sides} shows that there are no points in the eastern region in relation to \(\pi_{i_1}\) or the southern region in relation to \(\pi_{i_2}\). It follows that the points causing right-inversions for indices \(i_1\) and \(i_2\) (excluding \(1\) and \(\pi_n\)) are contained in the southern and eastern gray regions of Figure \ref{fig:no doublecorner and both sides}, respectively, and none of the points creates a right-inversion for \emph{both} \(i_1\) and \(i_2\). The sum of these inversions therefore satisfies
  \begin{equation*}
    \pi_{i_1} - i_1 + \pi_{i_2} - i_2 - 2 \leq \pi_1 - 1 + n - \pi^{-1}_n.
  \end{equation*}
  Counting inversions of \(\pi\) as in the previous proof, we get
  \begin{align*}
    \inv(\pi) &\geq
      \pi_{i_1} - i_1
      \ \ +\ \ \pi_{i_2} - i_2
      \ \ +\ \ {\underbrace{\pi_1 - 1
      \ \ +\ \ n - \pi^{-1}_n}_{\geq \pi_{i_1} - i_1 + \pi_{i_2} - i_2 - 2}} \\
      &\quad +\ \ {\underbrace{\pi^{-1}_1 - 1}_{\geq i_2}}
      \ \ +\ \ i_1
      \ \ +\ \ n - \pi_{i_2} + 1
      \ \ +\ \ \underbrace{n - \pi_n}_{\geq n - \pi_{i_1} + 1}
      \ \ -\ \ 10 \\
    &\geq 2(\pi_{i_1} - i_1 + \pi_{i_2} - i_2) + i_2 + i_1 + n - \pi_{i_2} + n - \pi_{i_1} - 10 \\
    &= 2n + {\underbrace{\pi_{i_1} - i_1}_{\geq 3}} + {\underbrace{\pi_{i_2} - i_2}_{\geq 3}} - 10 \\
    &\geq 2n - 4.
  \end{align*}
  Figure \ref{fig:no doublecorner and both sides} shows the inversions we counted.
\end{proof}

\begin{figure}[htb]
  \begin{minipage}[t]{0.5\textwidth}\vspace{0pt}
    \begin{tikzpicture}[scale=0.7]
      \fill[black!15] (2.5,1) rectangle (3.5,3);
      \fill[black!15] (7,6.5) rectangle (9,7.5);
  
      \draw (1,1) rectangle (9,9);
  
      \foreach \x\y in {1/3,2.5/6.5,3.5/7.5,7/9}
      {
        \draw 
          (\x,\y) -- (9,\y);
        \draw[line width=4pt, blue!40] 
          (\x,\y-0.2) -- (\x,1);
        \node[blue] at (\x,\y-0.003) {\pt};
      }
      \foreach \x\y in {5/1,3/2,8/7,9/5.5}
      {
        \draw[line width=4pt, red!40] 
          (\x-0.2,\y) -- (1,\y);
        \node[red] at (\x,\y-0.003) {\pt};
      }
      \foreach \x\y in {1/1,1/2,2.5/1,2.5/2,2.5/5.5,3.5/1,3.5/5.5,3.5/7,7/5.5,7/7}
        \node[dot, violet!50, minimum size=5.25pt] at (\x,\y) {};
  
      \node at (8,2) {\(\emptyset\)};
      \node at (5.25,4.75) {\(\emptyset\)};
      \node at (0.4,3) {\(\pi_1\)};
      \node at (1.9,6.5) {\(\pi_{i_1}\)};
      \node at (3,7.9) {\(\pi_{i_2}\)};
      \node at (7,9.5) {\(n\)};
      \node at (5,0.4) {\(1\)};
      \node at (3.05,2.5) {\(\pi_{j_1}\)};
      \node at (8.6,6.9) {\(\pi_{j_2}\)};
      \node at (9.6,5.45) {\(\pi_n\)};
    \end{tikzpicture}
  \end{minipage}\hfill
  \begin{minipage}[t]{0.5\textwidth}\vspace{1.2em}
    \caption{Counting inversions of a permutation with points in the southern and eastern regions in relation to the points \(\pi_{i_1}\) and \(\pi_{i_2}\) in the northwestern region. Proof of Lemma \ref{lem:no doublecorner and both sides}.}
    \label{fig:no doublecorner and both sides}
  \end{minipage}
\end{figure}

We are now prepared to prove one of our central lemmas.

\begin{lemma} \label{lem:corner decomposable}
  If \(\pi \in \Av_n^k(1324)\) with \(k \leq 2n-7\) and \(\pi_1 < \pi_n\), \(\pi^{-1}_1 < \pi^{-1}_n\) has a point in its northwestern region, then \(\pi \delete 1\) or \(\pi \delete \pi_n\) is decomposable.
\end{lemma}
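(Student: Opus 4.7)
The proof will proceed by a case analysis on whether the eastern region of $\pi$ is empty, and in each case we will exhibit an explicit cut witnessing the decomposability of $\pi \delete 1$ or $\pi \delete \pi_n$. The key reformulation is: $\pi \delete 1$ is decomposable at $m$ iff $\{\pi_1, \ldots, \pi_m\} = \{2, \ldots, m+1\}$ for some $m < \pi^{-1}_1$, while $\pi \delete \pi_n$ is decomposable at $m$ iff $\{\pi_1, \ldots, \pi_m\} = \{1, \ldots, m+1\} \setminus \{\pi_n\}$ for some $m \in [\pi_n, n-2]$. By Lemma \ref{lem:not both corners} the southeastern region is empty, so positions in $(\pi^{-1}_n, n)$ already carry values $\geq \pi_1$.

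If the eastern region is empty, the plan is to show that $\pi \delete \pi_n$ is decomposable at $m = \pi_n$. Let $w$ denote the smallest value of any NW point. A 1324-avoidance argument applied to the tuple $(\pi_1, \pi_i, \pi_r, n)$, where $\pi_i$ is a NW point and $\pi_r$ is a hypothetical ``gap'' value in $(\pi_n, w)$ sitting at some position $r < \pi^{-1}_n$, shows that every such gap value is forced into positions $(\pi^{-1}_n, n)$. Each gap value then contributes at least two unavoidable inversions (with $n$ at $\pi^{-1}_n$ and with $\pi_n$ at $n$), so adding these to the baseline count of Lemma \ref{lem:not both corners} violates $k \leq 2n-7$ unless $w = \pi_n + 1$. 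In the latter situation a parallel inversion count forces $\pi^{-1}_1 \leq \pi_n$ and the position of the NW point $\leq \pi_n$, so that positions $[1, \pi_n]$ carry exactly $\{1, \ldots, \pi_n - 1\} \cup \{\pi_n + 1\}$, giving the required cut.

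If the eastern region is nonempty, the plan is to show that $\pi \delete 1$ is decomposable at $m = \pi_1 - 1$. Let $\pi_{i_0}$ be the NW point of smallest value. Lemma \ref{lem:no corner and both sides} (together with Lemma \ref{lem:no doublecorner and both sides} when there are several NW points) forces the southern region relative to $\pi_{i_0}$ to be empty, so the values $\{2, \ldots, \pi_1 - 1\}$ are trapped in positions $[2, i_0 - 1]$, forcing $i_0 \geq \pi_1$. A further inversion count rules out $i_0 > \pi_1$: each extra position must hold a middle value in $\{\pi_1 + 1, \ldots, \pi_n - 1\}$ whose presence generates inversions with $\pi_{i_0}$ and with the succeeding small values, overshooting $2n-7$. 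Hence $i_0 = \pi_1$ and positions $[1, \pi_1 - 1]$ carry exactly $\{\pi_1\} \cup \{2, \ldots, \pi_1 - 1\} = \{2, \ldots, \pi_1\}$, which is the desired cut.

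The main obstacle is the careful inversion bookkeeping required in both cases: each hypothetical ``gap'' configuration (whether in value ranges in Case 1 or in positions in Case 2) has to be charged at least one extra inversion beyond the $2n - 7$ budget, and the counting is in the spirit of Lemmas \ref{lem:no corner and both sides} and \ref{lem:no doublecorner and both sides} but tracks somewhat different left/right inversions of a handful of distinguished indices. The interaction between the NW structure and the positions of $\pi_1$, $\pi^{-1}_1$, $n$, and $\pi_n$ creates several sub-cases, each of which must be closed separately.
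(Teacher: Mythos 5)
Your Case 1 is keyed to the wrong region and is false as stated. Take \(n=10\) and \(\pi=(2,3,4,5,9,1,10,7,8,6)\). This permutation avoids \(1324\), has \(\inv(\pi)=13=2n-7\), satisfies \(\pi_1=2<6=\pi_n\) and \(\pi^{-1}_1=6<7=\pi^{-1}_n\), and has the northwestern point \(\pi_5=9\); its eastern region is empty (positions \(8,9\) hold \(7,8>\pi_n\)), so it lies squarely in your first case. Yet \(\pi\delete\pi_n=(2,3,4,5,8,1,9,6,7)\) is indecomposable, and the first \(\pi_n=6\) positions of \(\pi\) carry \(\{1,2,3,4,5,9\}\), not \(\{1,\ldots,5,7\}\); here only \(\pi\delete 1=(1,2,3,4,8,9,6,7,5)\) is decomposable. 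Your supporting bookkeeping also fails on this example: the smallest northwestern value is \(w=9\neq\pi_n+1\), the gap values \(7,8\) do sit east of \(\pi^{-1}_n\), but each contributes only one inversion beyond the natural baseline (its inversion with \(\pi_n\) is already among the \(n-\pi_n\) left-inversions of position \(n\), which the count of Lemma \ref{lem:not both corners} uses), and the total is exactly \(2n-7\). So ``eastern region empty'' does not force \(\pi\delete\pi_n\) to be decomposable, let alone with a cut at \(\pi_n\); that whole branch of your argument cannot be repaired by sharper counting, because its conclusion is wrong.

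The dichotomy has to be taken on the \emph{southern} region relative to the leftmost northwestern point \(\pi_{i_1}\), which is what the paper does. If it is empty, \(\pi\delete 1\) is decomposable with the cut just before position \(i_1\): an inversion \((j_1,j_2)\) with \(j_1<i_1\le j_2\) and \(\pi_{j_2}\neq 1\) would force \(\pi_{j_2}>\pi_1\) (southern, central and southeastern regions are empty, the last by Lemma \ref{lem:not both corners}), and then \(\pi_1\,\pi_{j_1}\,\pi_{j_2}\,\pi_n\) is a \(1324\); no inversion counting is needed, and the cut is at \(i_1-1\), not at \(\pi_1-1\) (in the example above it is at \(4\)). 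If the southern region is nonempty, Lemmas \ref{lem:no corner and both sides} and \ref{lem:no doublecorner and both sides} force the eastern region to be empty, and applying the first case to \(\rc(\pi)^{-1}\) yields that \(\pi\delete\pi_n\) is decomposable. Your Case 2 at least aims at the correct conclusion, but insisting on the cut exactly at \(\pi_1-1\) (equivalently \(i_0=\pi_1\)) is both unnecessary and unproved: the ``further inversion count'' excluding \(i_0>\pi_1\) is only asserted, and in the configurations it must exclude (an early middle value together with an eastern point) the totals come out at \(2n-6\), so the margin is a single inversion and the count would have to be done carefully rather than waved at.
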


\begin{proof}
  Let \(i_1 < i_2 < \ldots < i_m\) be the set of all indices whose points lie in the northwestern region of \(\pi\). First, assume that no point lies in the southern region in relation to \(\pi_{i_1}\) and denote \(\sigma = \pi \delete 1\). We claim that \(\sigma_1 \ldots \sigma_{i_1 - 1}\) is a permutation, which in turn would imply that \(\sigma\) is decomposable. Indeed, otherwise there exists indices \(j_1\) and \(j_2\) such that
  \begin{equation*}
    j_1 < i_1 < j_2 \qquad \text{and} \qquad \pi_{j_1} > \pi_{j_2}.
  \end{equation*}
  However, we assumed that the southern region in relation to \(i_1\) is empty, so \(\pi_{j_2} > \pi_1\) and therefore \(\pi_1 \pi_{j_1} \pi_{j_2} \pi_n\) forms a \(1324\) pattern. Figure \ref{fig:corner decomposable} visualizes the argument: the red points \(\pi_{j_1}\) and \(\pi_{j_2}\) are in impossible positions, implying that the red region in the bottom right must be empty and therefore that the blue region to its left is a permutation in \(\sigma\).

  If there is a point in the southern region in relation to \(\pi_{i_1}\), then Lemmas \ref{lem:no corner and both sides} and \ref{lem:no doublecorner and both sides} prove that the eastern region in relation to \(\pi_{i_m}\) is empty instead. This case is symmetrical to the first one after reverse-complementation and inversion; more precisely, \(\rc(\pi)^{-1}\) gets us back to the first case and therefore
  \begin{equation*}
    \pi \delete \pi_n = \rc\big(\rc(\pi)^{-1} \delete 1\big)^{-1}
  \end{equation*}
  is decomposable.
\end{proof}

\begin{figure}[htb]
  \begin{minipage}[t]{0.5\textwidth}\vspace{0pt}
    \begin{tikzpicture}[scale=0.7]
      \draw[fill=blue!40] (1,1) rectangle (3,3);
      \draw[fill=red!40] (3,1) rectangle (9,3);
  
      \draw (1,1) rectangle (9,9);
  
      \foreach \x\y in {1/2.5,3/6,4/7,7/9,5/1,9/5}
      {
        \draw 
          (\x,1) -- (\x,\y) -- (9,\y);
        \node at (\x,\y-0.003) {\pt};
      }
  
      \node[red] at (2,4.5) {\pt};
      \node[red] at (6,3.75) {\pt};
      \draw 
        (1,5) -- (9,5);
      
      \foreach \d in {0,0.2,0.4}
        \node[dot, minimum size=1.5pt] at (3.3+\d,6.3+\d) {};
  
      \node at (6,2) {\(\emptyset\)};
      \node at (0.4,2.45) {\(\pi_1\)};
      \node at (2.5,6.5) {\(\pi_{i_1}\)};
      \node at (3.5,7.5) {\(\pi_{i_m}\)};
      \node at (7,9.5) {\(n\)};
      \node at (5,0.4) {\(1\)};
      \node at (9.6,4.95) {\(\pi_n\)};
      \node at (2,3.9) {\(\pi_{j_1}\)};
      \node at (6,4.3) {\(\pi_{j_2}\)};
    \end{tikzpicture}
  \end{minipage}\hfill
  \begin{minipage}[t]{0.5\textwidth}\vspace{1.2em}
    \caption{A permutation \(\pi\) satisfying the assumptions of Lemma \ref{lem:corner decomposable} is almost decomposable, since \(\pi \delete 1\) or \(\pi \delete \pi_n\) is decomposable. The points \(\pi_{j_1}\) and \(\pi_{j_2}\) cannot be placed as they are in the picture, since they create a \(1324\) pattern.}
    \label{fig:corner decomposable}
  \end{minipage}
\end{figure}
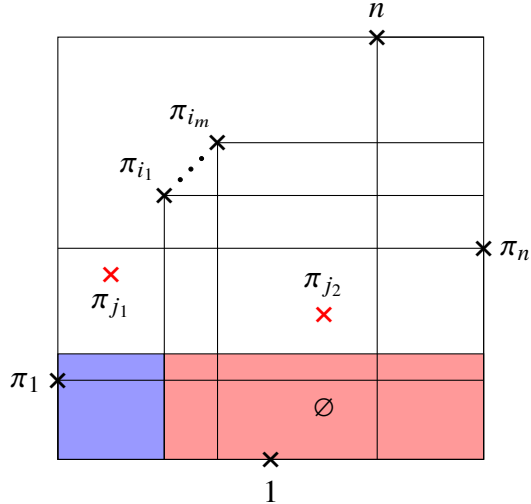

So far, we have always assumed that \(\pi_1 < \pi_n\) and \(\pi^{-1}_1 < \pi^{-1}_n\). We will now assume the opposite, leading to the other half of Theroem \ref{thm:removepoint}. This turns out to be much easier, since if e.g.\ \(\pi_1 > \pi_n\), then deleting \(\pi_1\) and \(\pi_n\) gets rid of a lot of inversions.

\begin{lemma} \label{lem:sides decomposable}
  If \(\pi \in \Av_n^k(1324)\), \(k\leq 2n - 5\) and \(\pi_1 > \pi_n\), then \(\pi \delete \pi_1\) or \(\pi \delete \pi_n\) is decomposable.
\end{lemma}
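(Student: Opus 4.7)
The plan is to argue by contradiction, assuming that both $\pi \delete \pi_1$ and $\pi \delete \pi_n$ are indecomposable and deriving $k>2n-5$. Write $a=\pi_1$ and $b=\pi_n$, so $a>b$. Any direct-sum decomposition of $\pi$ would force $\pi_1\le|\pi^{(1)}|<\pi_n$, so $\pi$ itself is indecomposable. The indices $1$ and $n$ contribute $a-1$ and $n-b$ inversions respectively, overlapping only in $(1,n)$. Applying $\comp+\inv\ge|\cdot|$ to the indecomposable permutations $\pi \delete \pi_1$ and $\pi \delete \pi_n$ gives $k\ge a+n-3$ and $k\ge 2n-b-2$. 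When $a=n$ or $b=1$ these already yield $k\ge 2n-3>2n-5$, so I may assume $2\le b<a\le n-1$.

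Next I turn to the double deletion $\pi':=\pi \delete \pi_1 \delete \pi_n$, of length $n-2$ and inversion count $k-(a+n-b-2)\le n-3-(a-b)\le n-4$. Hence $\comp(\pi')\ge 2$, so by \eqref{eq:1324 form}
\[
\pi'=\alpha\oplus\underbrace{1\oplus\cdots\oplus 1}_{c-2\text{ copies}}\oplus\omega,
\]
where $c=\comp(\pi')\ge 2$, the component $\alpha$ is indecomposable and $132$-avoiding of size $m_1$, the component $\omega$ is indecomposable and $213$-avoiding of size $m_c$, and $m_1+m_c=n-c$. The $c-1$ decomposition indices of $\pi'$ are the consecutive integers $m_1, m_1+1, \ldots, m_1+c-2$.

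Each decomposition index $i$ of $\pi'$ corresponds to a set equation $\{\pi_2,\ldots,\pi_{i+1}\}=S_i$, obtained by pulling the prefix $\{1,\ldots,i\}$ back through the relabeling that removed $a$ and $b$. Computing explicitly,
\[
S_i=\begin{cases}\{1,\ldots,i\} & \text{if } i<b,\\ \{1,\ldots,i+1\}\setminus\{b\} & \text{if } b\le i<a-1,\\ \{1,\ldots,i+2\}\setminus\{a,b\} & \text{if } i\ge a-1.\end{cases}
\]
A short check shows that $i<b$ makes $\pi \delete \pi_1$ decomposable at $i$, while $i=a-2$ or $i\ge a-1$ makes $\pi \delete \pi_n$ decomposable at $i+1$. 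Any such ``good'' decomposition of $\pi'$ contradicts the indecomposability assumption, so all $c-1$ decomposition indices must lie in the ``bad'' range $b\le i\le a-3$, forcing in particular $a-b\ge 3$.

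The contradiction now drops out from two counting bounds on $c$. Fitting $c-1$ consecutive integers into the interval $[b,a-3]$ of size $a-b-2$ forces $c\le a-b-1$. On the other hand, indecomposability of $\alpha$ and $\omega$ gives $\inv(\pi')\ge(m_1-1)+(m_c-1)=n-c-2$; combined with $\inv(\pi')\le n-a+b-3$ this yields $c\ge a-b+1$. Since $a-b+1>a-b-1$, the two bounds are incompatible, completing the proof. The main obstacle is recognising that single-deletion inversion counts alone cannot close the gap when $2\le b<a\le n-1$: one has to pass to the double deletion, exploit the rigid direct-sum form~\eqref{eq:1324 form} of decomposable $1324$-avoiders, and pair the three-case description of $S_i$ with a component-count bound in order to force the contradiction.
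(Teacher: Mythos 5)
Your proof is correct and follows essentially the same route as the paper: delete both \(\pi_1\) and \(\pi_n\), use \(\comp + \inv \geq\) length to force at least \(\pi_1 - \pi_n + 1\) components in the double deletion, and then argue that reinserting one of the two boundary entries must preserve a cut point. The paper reaches the final dichotomy directly by comparing \(\pi_1\) and \(\pi_n\) with the sizes of the outer components, whereas you reach it by contradiction, counting which decomposition indices survive each reinsertion -- the same content with a bit more bookkeeping (your separate treatment of \(\pi_1 = n\) and \(\pi_n = 1\) is not needed in the paper's version).
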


\begin{proof}
  Let \(\delta = \pi_1 - \pi_n\) and \(\sigma = \pi \delete \{\pi_1, \pi_n\}\). The right-inversions of index \(1\) and left-inversions of index \(n\) in \(\pi\) sum up to
  \begin{equation*}
    \pi_1 - 1 + n - \pi_n - 1 = n - 2 + \delta,
  \end{equation*}
  so
  \begin{equation*}
    \comp(\sigma) \geq |\sigma| - \inv(\sigma) \geq n - 2 - (2n - 5 - n + 2 - \delta) = \delta + 1 \geq 2.
  \end{equation*}
  Write \(\sigma = \sigma^{(1)} \oplus 1 \oplus \ldots \oplus 1 \oplus \sigma^{(2)}\). We must have \(\pi_1 < n - |\sigma^{(2)}| + 1\) or \(\pi_n > |\sigma^{(1)}|\), since otherwise
  \begin{equation*}
    \pi_1 - \pi_n \geq n - |\sigma^{(2)}| + 1 - |\sigma^{(1)}| = \delta + 2.
  \end{equation*}
  If \(\pi_1 < n - |\sigma^{(2)}| + 1\) then \(\pi \delete \pi_n\) is decomposable, and if  \(\pi_n > |\sigma^{(1)}|\) then \(\pi \delete \pi_1\) is decomposable.
\end{proof}

\section{The injection} \label{sec:injection}

Denote by \(\mathcal D_n^k\) and \(\mathcal A_n^k\) the sets of decomposable and almost decomposable permutations in \(\Av_n^k(1324)\), respectively. In this section we will construct injections
\begin{equation*}
  g : \mathcal D_n^k \longrightarrow \Av_{n+1}^k(1324) \qquad \text{and} \qquad
  f : \mathcal A_n^k \longrightarrow \Av_{n+1}^k(1324),
\end{equation*}
with disjoint images, for all \(n\) and \(k\). If \(k \leq 2n - 7\) then all permutations in \(\Av_n^k(1324)\) are decomposable or almost decomposable by Theorem \ref{thm:removepoint}, so our mappings combine to an injection
\begin{equation*}
  \Av_n^k(1324) \longrightarrow \Av_{n+1}^k(1324).
\end{equation*}
In particular, this verifies Conjecture \ref{conj:1324anders} for all \(k \leq 2n-7\). 

First of all, any \(\pi \delete \mathcal D_n^k\) can be written in the form
\begin{equation*}
  \pi = \pi^{(1)} \oplus {\underbrace{1 \oplus \ldots \oplus 1}_{m \text{ times}}} \oplus \pi^{(2)}
\end{equation*}
for some \(m \geq 0\) by \eqref{eq:1324 form}. This allows us to set
\begin{equation*}
  g(\pi) = \pi^{(1)} \oplus {\underbrace{1 \oplus \ldots \oplus 1}_{m+1 \text{ times}}} \oplus \pi^{(2)} \in \mathcal D_{n+1}^k.
\end{equation*}
The image \(g(\mathcal D_n^k)\) is exactly the set of all permutations in \(\Av_{n+1}^k(1324)\) with at least three components, and \(g\) is clearly injective. Note that when \(n \geq k + 2\), \(g\) is a bijection.

We will define \(f\) in a similar way. Let \(\pi \in \mathcal A_n^k\).
\begin{enumerate}
  \item If \(\pi \delete \pi_1\) is decomposable, let \(f(\pi)\) be the permutation with \(f(\pi)_1 = \pi_1\) and \(f(\pi) \delete \pi_1 = g(\pi \delete \pi_1)\). \label{inj:delete pi1}
  \item If \(\pi \delete 1\) is decomposable, let \(f(\pi) = f(\pi^{-1})^{-1}\). \label{inj:delete 1}
  \item Otherwise, let \(f(\pi) = ({\rc} \circ f \circ {\rc})(\pi)\), where \(\rc(\pi)\) is the reverse-complement. \label{inj:delete other}
\end{enumerate}

\begin{remark} \label{rmk:f properties}
  Let \(\pi \in \mathcal A_n^k\).
  \begin{enumerate}[(a)] 
    \item It is impossible for both \(\pi \delete 1\) and \(\pi \delete \pi_1\) to be decomposable by Proposition \ref{prop:forbidden removable point combinations}, so parts \ref{inj:delete pi1} and \ref{inj:delete 1} of the definition are exclusive.
    \item If \(\pi \delete 1\) and \(\pi \delete \pi_1\) are indecomposable, then \(\pi \delete n\) or \(\pi \delete \pi_n\) must be decomposable, and it follows that \(\rc(\pi) \delete 1\) or \(\rc(\pi) \delete \rc(\pi)_1\) is decomposable. This is why \((f \circ {\rc})(\pi)\) exists when it is used in part \ref{inj:delete other}.
    \item If \(\pi \delete \pi_1\) is decomposable then \(\pi_1 > \pi_2\), and therefore \(f(\pi)\) avoids \(1324\). The \(1324\)-avoiders are closed under inversion and taking reverse-complements, so \(f(\pi)\) avoids \(1324\) also in parts \ref{inj:delete 1} and \ref{inj:delete other}.
    \item The number of inversions is preserved: in part \ref{inj:delete pi1},
    \begin{equation*}
      \inv(f(\pi)) = \inv(g(\pi \delete \pi_1)) + \pi_1 - 1 = \inv(\pi \delete \pi_1) + \pi_1 - 1 = \inv(\pi).
    \end{equation*}
    Taking the inverse or the reverse-complement preserves the number of inversions, so this is true also for parts \ref{inj:delete 1} and \ref{inj:delete other}.
    \item \(f(\pi)\) has at most two components: in part \ref{inj:delete pi1}, if \(f(\pi) = \pi^{(1)} \oplus 1 \oplus \pi'\) then \(\pi = \pi^{(1)} \oplus \pi'\), a contradiction. Taking the inverse or the reverse-complement preserves the number of components, so this is true also for parts \ref{inj:delete 1} and \ref{inj:delete other}.\label{rmk:f properties: two comps}
    \item In part \ref{inj:delete other}, if \(\pi \delete \pi_n\) is decomposable then
    \begin{align*}
      f(\pi)_{n+1} = ({\rc} \circ f \circ {\rc})(\pi)_{n+1} &= n + 2 - (f \circ {\rc})(\pi)_{n + 2 - n - 1} \\
      &= n + 2 - \rc(\pi)_1 \\
      &= n + 2 - (n + 1 - \pi_{n + 1 - 1}) = \pi_n + 1.
    \end{align*}
    In part \ref{inj:delete 1} it is clear that \(f(\pi)^{-1}_1  = \pi^{-1}_1\), so if instead \(\pi \delete n\) is decomposable in part \ref{inj:delete other} then similarly \(f(\pi)^{-1}_{n+1} = \pi^{-1}_n + 1\). \label{rmk:f properties: edge values}
  \end{enumerate}
\end{remark}

\begin{example}
  Consider the permutation \(\pi = 35126874 \in \Av_8^8(1324)\). Here are the plots of \(\pi\) and \(\rc(\pi)\).
  \begin{center}
    \begin{tikzpicture}[scale=0.4]
      \clip (0.5,-1) rectangle (8.5,8.5);
      \draw[step=1, thick, \gridcolor] (0.5,0.5) grid (8.5,8.5);

      \foreach \x\y in {1/3,2/5,3/1,4/2,5/6,6/8,7/7,8/4}
        \node at (\x,\y-0.004) {\pt};
      
      \node at (4.5,-0.5) {\(\pi\)};
    \end{tikzpicture}
    \hspace{5mm}
    \begin{tikzpicture}[scale=0.4]
      \clip (0.5,-1) rectangle (8.5,8.5);
      \draw[step=1, thick, \gridcolor] (0.5,0.5) grid (8.5,8.5);

      \foreach \x\y in {1/5,2/2,3/1,4/3,5/7,6/8,7/4,8/6}
        \node at (\x,\y-0.004) {\pt}; 
      
      \node at (4.5,-0.5) {\(\rc(\pi)\)};
    \end{tikzpicture}
  \end{center}
  We can see that \(\pi \delete 1\) and \(\pi \delete \pi_1\) are both indecomposable, whereas \(\pi \delete \pi_n\) is decomposable. Therefore \(\rc(\pi) \delete \rc(\pi)_1\) is decomposable. The following figure shows the permutations \(\rc(\pi) \delete \rc(\pi)_1\), \(g(\rc(\pi) \delete \rc(\pi_1))\) and \((f \circ {\rc})(\pi)\). 
  \begin{center}
    \begin{tikzpicture}[scale=0.4]
      \fill[blue!15] (0.5,0.5) rectangle (3.5,3.5);
      \fill[blue!15] (3.5,3.5) rectangle (7.5,7.5);

      \draw[step=1, thick, \gridcolor] (0.5,0.5) grid (7.5,7.5);

      \foreach \x\y in {1/2,2/1,3/3,4/6,5/7,6/4,7/5}
        \node at (\x,\y-0.004) {\pt}; 
      
      \node at (4,-1.5) {\(\rc(\pi) \delete \rc(\pi)_1\)};
    \end{tikzpicture}
    \hspace{5mm}
    \begin{tikzpicture}[scale=0.4]
      \fill[blue!15] (0.5,0.5) rectangle (3.5,3.5);
      \fill[blue!15] (3.5,3.5) rectangle (4.5,4.5);
      \fill[blue!15] (4.5,4.5) rectangle (8.5,8.5);

      \draw[step=1, thick, \gridcolor] (0.5,0.5) grid (8.5,8.5);

      \foreach \x\y in {1/2,2/1,3/3,4/4,5/7,6/8,7/5,8/6}
        \node at (\x,\y-0.004) {\pt}; 

      \node at (4.5,-1) {\(g(\rc(\pi) \delete \rc(\pi)_1)\)};
    \end{tikzpicture}
    \hspace{5mm}
    \begin{tikzpicture}[scale=0.4]
      \fill[blue!15] (1.5,0.5) rectangle (4.5,3.5);
      \fill[blue!15] (4.5,3.5) rectangle (5.5,4.5);
      \fill[blue!15] (5.5,5.5) rectangle (9.5,9.5);

      \draw[step=1, thick, \gridcolor] (0.5,0.5) grid (9.5,9.5);

      \foreach \x\y in {1/5,2/2,3/1,4/3,5/4,6/8,7/9,8/6,9/7}
        \node at (\x,\y-0.004) {\pt}; 

      \node at (5,-0.5) {\((f \circ {\rc})(\pi)\)};
    \end{tikzpicture}
  \end{center}
  Finally, we get the following.
  \begin{center}
    \begin{tikzpicture}[scale=0.4]
      \node at (-10,5) {\(f(\pi) = ({\rc} \circ f \circ {\rc})(\pi) = 341267985 \quad = \)};

      \draw[step=1, thick, \gridcolor] (0.5,0.5) grid (9.5,9.5);

      \foreach \x\y in {1/3,2/4,3/1,4/2,5/6,6/7,7/9,8/8,9/5}
        \node at (\x,\y-0.004) {\pt};
    \end{tikzpicture}
  \end{center}
\end{example}

In order to show that \(f\) is injective, we will construct its inverse. The natural way to obtain \(\pi\) from \(f(\pi) \eqqcolon \sigma\) is by reversing the steps used to construct \(\sigma\): first delete a well-chosen entry in \(\{1,n+1,\sigma_1,\sigma_{n+1}\}\) from \(\sigma\), then apply \(g^{-1}\) on the resulting permutation (which has at least three components), and finally insert back the deleted entry. The choice of entry to delete should mirror the definition of \(f\): deleting \(1\) or \(\sigma_1\) is always prioritized over deleting \(n\) or \(\sigma_n\).
  
However, it is not obvious that this is correct. Suppose, for example, that \(\pi \delete 1\) and \(\pi \delete \pi_1\) are indecomposable, and \(\pi \delete n\) is decomposable. This means that \(\sigma \delete (n+1)\) has at least three components, and deleting \(n+1\) is indeed correct. But is it possible that \(\sigma \delete 1\) or \(\sigma \delete \sigma_1\) also have at least three components? If, say, \(\sigma \delete 1\) had at least three components, we would prioritize deleting \(1\) over deleting \(n+1\), and (after applying \(g^{-1}\) and inserting back the entry \(1\)) obtain some permutation \(\tau \in \mathcal A_n^k\) such that \(\tau \delete 1\) is decomposable. In particular \(\tau \neq \pi\), so our supposed inverse of \(f\) would not work.

To show that \(f\) is injective, it is therefore crucial to show that the situation we described never happens. In other words, if \(\pi \delete 1\) is indecomposable, then \(f(\pi) \delete f(\pi)_1\) must have at most two components. We prove this in Lemma \ref{lem:not unremovable to removable}, after a simple intermediate result.

\begin{lemma} \label{lem:f fixed points}
  Let \(\pi \in \mathcal A_n^k\) and \(i \in [n]\).
  \begin{enumerate}[(a)]
    \item If \(\pi \delete \pi_1\) is decomposable and \(\pi_i > \pi_1\), then \(f(\pi)_{i+1} = \pi_i + 1\). \label{lem:f fixed 1}
    \item If \(\pi \delete \pi_1\) and \(\pi \delete 1\) are indecomposable, \(\pi \delete \pi_n\) is decomposable, and \(\pi_i < \pi_n\), then \(f(\pi)_i = \pi_i\). \label{lem:f fixed 2}
  \end{enumerate}
\end{lemma}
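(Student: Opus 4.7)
I would prove part \ref{lem:f fixed 1} directly by unwinding the construction of \(f\) in Case \ref{inj:delete pi1}, then reduce part \ref{lem:f fixed 2} to part \ref{lem:f fixed 1} via the reverse-complement of Case \ref{inj:delete other}.

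For part \ref{lem:f fixed 1}, I would write \(\sigma \coloneqq \pi \delete \pi_1 = \sigma^{(1)} \oplus 1^m \oplus \sigma^{(2)}\) as in \eqref{eq:1324 form} and set \(a \coloneqq |\sigma^{(1)}|\), so that \(f(\pi)_1 = \pi_1\) and \(f(\pi) \delete \pi_1 = g(\sigma) = \sigma^{(1)} \oplus 1^{m+1} \oplus \sigma^{(2)}\). The main substantive step is to deduce from the indecomposability of \(\pi\) that \(\pi_1 \geq a + m + 2\). Since the first \(a + m\) entries of \(\sigma\) use values in \(\{1, \ldots, a+m\}\), if \(\pi_1 \leq a + m + 1\) then prepending \(\pi_1\) and undoing the relabeling would fill positions \(1, \ldots, a+m+1\) of \(\pi\) with exactly the values \(\{1, \ldots, a+m+1\}\), contradicting indecomposability.

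With \(\pi_1 \geq a + m + 2\) in hand, assuming \(\pi_i > \pi_1\) forces \(\pi_i \geq a + m + 3\), so \(\sigma_{i-1} = \pi_i - 1 \geq a + m + 2\) exceeds every value of \(\sigma^{(1)}\) and of the middle block, placing both the position \(i - 1\) and the value \(\sigma_{i-1}\) inside \(\sigma^{(2)}\). Because \(g\) inserts one more fixed point into the middle block, each entry of \(\sigma^{(2)}\) is shifted by \(+1\) in both position and value, so \(g(\sigma)_i = \sigma_{i-1} + 1 = \pi_i\). Prepending \(\pi_1\) to form \(f(\pi)\) then relabels every \(g(\sigma)\)-value that is \(\geq \pi_1\) by \(+1\); since \(g(\sigma)_i = \pi_i > \pi_1\), this yields \(f(\pi)_{i+1} = g(\sigma)_i + 1 = \pi_i + 1\).

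For part \ref{lem:f fixed 2}, I would set \(\tau \coloneqq \rc(\pi)\). Since reverse-complementation preserves decomposability, \(\tau \delete \tau_1 = \rc(\pi \delete \pi_n)\) is decomposable, placing \(\tau\) in Case \ref{inj:delete pi1} and \(\pi\) in Case \ref{inj:delete other}, so \(f(\pi) = \rc(f(\tau))\). The hypothesis \(\pi_i < \pi_n\) rewrites as \(\tau_{n+1-i} = n+1-\pi_i > n+1-\pi_n = \tau_1\), and part \ref{lem:f fixed 1} applied to \(\tau\) yields \(f(\tau)_{n+2-i} = \tau_{n+1-i} + 1 = n + 2 - \pi_i\). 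Undoing the reverse-complement gives \(f(\pi)_i = (n+2) - f(\tau)_{n+2-i} = \pi_i\). I expect the only delicate step to be establishing \(\pi_1 \geq a + m + 2\); the rest is routine bookkeeping of how \(g\) and the prepending of \(\pi_1\) affect positions and values.
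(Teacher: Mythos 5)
Your proposal is correct and follows essentially the same route as the paper: part \ref{lem:f fixed 1} by directly unwinding the definition of \(f\) via \(g\), with the key point being a lower bound on \(\pi_1\) forced by the indecomposability of \(\pi\) (the paper uses the slightly weaker bound that the first component of \(\pi \delete \pi_1\) has length less than \(\pi_1\), while your stronger bound \(\pi_1 \geq a+m+2\) is also valid and lets you place the entry directly in \(\sigma^{(2)}\)), and part \ref{lem:f fixed 2} by the identical reverse-complement reduction to part \ref{lem:f fixed 1}.
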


\begin{proof} \leavevmode
  For \ref{lem:f fixed 1} we have \((\pi \delete \pi_1)_{i-1} = \pi_i - 1\). The length of the first component of \(\pi \delete \pi_1\) is strictly less than \(\pi_1\), so \((f(\pi) \delete \pi_1)_i = \pi_i\) and \(f(\pi)_{i+1} = \pi_i + 1\).
    
  For part \ref{lem:f fixed 2}, \(f(\pi) = ({\rc} \circ f \circ {\rc})(\pi)\) by definition of \(f\), where \(\rc(\pi) \delete \rc(\pi)_1\) is decomposable. Since \(\rc(\pi)_1 = n + 1 - \pi_n\) and
  \begin{equation*}
    \rc(\pi)_{n + 1 - i} = n + 1 - \pi_{n + 1 - (n + 1 - i)} = n + 1 - \pi_i,
  \end{equation*}
  our assumption implies that \(\rc(\pi)_{n + 1 - i} > \rc(\pi)_1\). Part \ref{lem:f fixed 1} gives
  \begin{equation*}
    (f \circ {\rc})(\pi)_{n + 2 - i} = \rc(\pi)_{n + 1 - i} + 1 = n + 2 - \pi_i,
  \end{equation*}
  and therefore
  \begin{equation*}
    f(\pi)_i = ({\rc} \circ f \circ {\rc})(\pi)_i = n + 2 - (f \circ {\rc})(\pi)_{n + 2 - i} = \pi_i. \qedhere
  \end{equation*}
\end{proof}

\begin{lemma} \label{lem:not unremovable to removable}
  Let \(\pi \in \mathcal A_n^k\). If \(\pi \delete \pi_1\) is indecomposable, then \(f(\pi) \delete f(\pi)_1\) has at most two components.
\end{lemma}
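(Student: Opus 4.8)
The plan is to strip off the outer cases in the definition of $f$ using the symmetries built into it, reducing to the situation where $f$ is given by part~\ref{inj:delete pi1}, and then to describe $f(\mu)$ explicitly enough in that case to count components after deleting a single entry.

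First I would carry out the reduction. If $\pi\delete 1$ is decomposable (part~\ref{inj:delete 1}), then $f(\pi)=f(\pi^{-1})^{-1}$; reflecting across the line $y=x$ turns ``delete the first column of $f(\pi)$'' into ``delete the value-$1$ entry of $f(\pi^{-1})$'', which preserves the number of components, while $\pi^{-1}$ is handled by part~\ref{inj:delete pi1} (because $\pi\delete 1$ is decomposable iff $\pi^{-1}\delete(\pi^{-1})_1$ is). If instead $\pi$ falls under part~\ref{inj:delete other}, then $f(\pi)=(\rc\circ f\circ\rc)(\pi)$, and deleting the first column of $f(\pi)$ becomes, via $\rc$ (again component-preserving), deleting the \emph{last} column of $f(\rc\pi)$, where $\rc\pi$ lies in part~\ref{inj:delete pi1} or part~\ref{inj:delete 1}; in the latter subcase a further inversion turns this into deleting the \emph{largest value} of $f(\nu)$ for some $\nu$ handled by part~\ref{inj:delete pi1}. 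So the claim reduces to showing that, for $\mu$ handled by part~\ref{inj:delete pi1}, each of $f(\mu)\delete 1$, $f(\mu)\delete f(\mu)_{|\mu|+1}$ and $f(\mu)\delete(|\mu|+1)$ has at most two components; one should also note that the hypothesis that $\pi\delete\pi_1$ (and, in part~\ref{inj:delete other}, $\pi\delete 1$) is indecomposable passes under these symmetries to the extra fact that $\mu\delete\mu_{|\mu|}$ (resp.\ $\mu\delete|\mu|$) is indecomposable.

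Next I would pin down $f(\mu)$ for $\mu$ in part~\ref{inj:delete pi1}. Write $g(\mu\delete\mu_1)=\mu^{(1)}\oplus 1\oplus\cdots\oplus 1\oplus\mu^{(2)}$ with $\mu^{(1)},\mu^{(2)}$ indecomposable and $s\ge 1$ singletons, so that $f(\mu)$ is this permutation with a first column of value $\mu_1$ reinserted. Comparing prefixes, indecomposability of $\mu$ together with decomposability of $\mu\delete\mu_1$ gives $\mu_1\ge|\mu^{(1)}|+s+1$, and a direct computation shows that $f(\mu)$ decomposes exactly at the positions $j$ with $\mu_1\le j\le|\mu^{(1)}|+s+1$. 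Hence either $f(\mu)$ is indecomposable, or $\mu_1=|\mu^{(1)}|+s+1$ and $f(\mu)=\alpha\oplus\mu^{(2)}$ with $\alpha_1=|\alpha|$ (so $\alpha$ is indecomposable). For $f(\mu)\delete 1$ this already settles things: value $1$ lies inside the block $\mu^{(1)}$, and a direct inspection---recomputing decomposition points in the indecomposable case, and using that $\alpha\delete 1$ again has its largest entry in first position in the other---shows $f(\mu)\delete 1$ has at most two components.

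The remaining two deletions each remove a point from the top block $\mu^{(2)}$---its last column, or its largest entry---so everything comes down to showing that $\mu^{(2)}$ minus that point is indecomposable, after which $f(\mu)\delete(\cdot)=\alpha\oplus(\mu^{(2)}\delete(\cdot))$ has two components (or is dispatched by a similar count when $f(\mu)$ is indecomposable). This is where the extra hypothesis enters. Since $\mu^{(2)}$ is a $213$-avoider it has the rigid shape ``the largest values, then the value $1$, then the rest (recursively)'', so in particular $\mu^{(2)}_1\ne 1$; combining this shape with $\mu_1\ge|\mu^{(1)}|+s+1$ and with ``$\mu\delete\mu_{|\mu|}$ indecomposable'', one forces that $\mu^{(2)}$ minus its last column cannot split off a late component---otherwise $\mu$ itself, after reinserting $\mu_1$ and deleting $\mu_{|\mu|}$, would acquire a decomposition point lying too far to the right. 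I expect this last part---keeping track of how the relabellings caused by deleting $\mu_{|\mu|}$ (or $|\mu|$) interact with the block decomposition, and extracting precisely the inequality that rules out the extra component---to be the main obstacle; the rest is routine bookkeeping with direct sums.
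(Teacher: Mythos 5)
Your symmetry reduction is set up correctly, including how the hypothesis transfers (in the part~\ref{inj:delete 1} case the fact that \(\mu \delete 1\) is indecomposable is even automatic from Proposition~\ref{prop:forbidden removable point combinations}; in the part~\ref{inj:delete other} cases it becomes \(\mu \delete \mu_{|\mu|}\), resp.\ \(\mu \delete |\mu|\), indecomposable), and your description of \(f(\mu)\) for \(\mu\) in part~\ref{inj:delete pi1} is right: indecomposability of \(\mu\) forces \(\mu_1 \geq |\mu^{(1)}|+s+1\), so \(f(\mu)\) is either indecomposable or splits only at \(\mu_1\), and the argument you sketch for \(f(\mu)\delete 1\) does go through. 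But the proposal stops exactly where the lemma lives: the implication ``a bad split of \(\mu^{(2)}\) minus its last entry (or largest value) would make \(\mu\delete\mu_{|\mu|}\) (or \(\mu\delete|\mu|\)) decomposable'' is asserted, flagged by you as the main obstacle, and not proved. Moreover the target you reduce to is not quite the right one: when \(f(\mu)\) is indecomposable, \(\mu^{(2)}\delete(\cdot)\) may perfectly well be decomposable; what must be shown is that at most one of its decomposition points clears the threshold set by \(f(\mu)_1\), and the delicate case is precisely the boundary one where, because \(g\) inserts an extra singleton, a split clears the threshold in \(f(\mu)\delete(\cdot)\) but is off by one from clearing it in \(\mu\delete\mu_{|\mu|}\); there one needs a second decomposition point and must check that it does transfer. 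The appeal to the \(213\)-avoiding shape of \(\mu^{(2)}\) does not seem to be needed at all -- the whole step is direct-sum and threshold bookkeeping -- which suggests this part was not actually worked out. So as written there is a genuine gap, even though the route is viable and can be completed.

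For comparison, the paper sidesteps the block bookkeeping entirely: it argues by contradiction that if \(f(\pi)\delete f(\pi)_1\) had a first component of length \(m\), then by Remark~\ref{rmk:f properties}\ref{rmk:f properties: edge values} and Lemma~\ref{lem:f fixed points} the entries of \(f(\pi)\) with small values coincide with those of \(\pi\), so \(\pi_2\ldots\pi_{m+1}\) would itself be a permutation and \(\pi\delete\pi_1\) would be decomposable, contradicting the hypothesis. Your approach replaces that one observation by an explicit case analysis of decomposition points; the missing threshold analysis in the part~\ref{inj:delete other} cases is the entire content of the lemma, so the proposal does not yet constitute a proof.
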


\begin{proof}
  Denote \(\sigma = f(\pi)\) and \(\tau = \sigma \delete \sigma_1\). First, if \(\pi \delete 1\) is decomposable then \(\sigma \delete 1\) has at least three components and the claims follows from Proposition \ref{prop:forbidden removable point combinations}.

  Suppose instead that \(\pi \delete \pi_n\) is decomposable, and assume for the sake of a contradiction that \(\tau\) has at least three components. Denote the length of the first component of \(\tau\) by \(m\). Since the last \(n - \tau_n + 1\) entries of \(\tau\) must be contained in one component we have \(\tau_n > m + 1\), which implies that \(\sigma_{n+1} > m + 1\) and therefore that \(\pi_n > m\) by Remark \ref{rmk:f properties} \ref{rmk:f properties: edge values}. Lemma \ref{lem:f fixed points} \ref{lem:f fixed 1} shows that \(\sigma_i = \pi_i\) for all \(i\) for which \(\pi_i \leq m\), and we know that \(\sigma_2 \ldots \sigma_{m+1} = \tau_1 \ldots \tau_m\) is a permutation, so we get \(\pi_2 \ldots \pi_{m+1} = \tau_1 \ldots \tau_m\). Therefore \(\pi \delete \pi_1\) is decomposable, contradicting our original assumption.
  
  Lastly, suppose that \(\pi \delete n\) is decomposable, and assume again that \(\tau\) has at least three components, the first of which has length \(m\). In this case we have \(\tau^{-1}_n > m + 1\) which implies that \(\sigma^{-1}_{n+1} > m + 2\) (since an entry is added to the beginning) and therefore that \(\pi^{-1}_n > m + 1\) by Remark \ref{rmk:f properties} \ref{rmk:f properties: edge values}. Applying Lemma \ref{lem:f fixed points} \ref{lem:f fixed 2} to \(\pi^{-1}\) shows that \(\sigma^{-1}_i = \pi^{-1}_i\) for all \(i\) such that \(\pi^{-1}_i \leq m + 1\). Since \(\sigma_2 \ldots \sigma_{m+1}\) is a permutation, the condition \(\sigma^{-1}_i \leq m + 1\) holds for all \(i \in [m]\) and hence \(\pi^{-1}_1 \ldots \pi^{-1}_m = \sigma^{-1}_1 \ldots \sigma^{-1}_m\). We conclude that \(\pi_2 \ldots \pi_{m+1} = \tau_1 \ldots \tau_m\) is a permutation, and thus \(\pi \delete \pi_1\) is decomposable.
\end{proof}

\begin{theorem} \label{thm:injective}
  The function \(f: \mathcal A_n^k \to \Av_{n+1}^k(1324)\) is injective for all \(n\) and \(k\). Furthermore, \(\av_n^k(1324) \leq \av_{n+1}^k(1324)\) whenever \(k \leq 2n-7\).
\end{theorem}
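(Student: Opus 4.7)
The plan is to build a left inverse $h$ of $f$, verify $(h \circ f)(\pi) = \pi$ on $\mathcal{A}_n^k$, and then combine the two injections with Theorem~\ref{thm:removepoint} to deduce the inequality.

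First I would define $h$ on $\Av_{n+1}^k(1324)$ by inspecting, in priority order, the four boundary-deletions $\sigma \delete \sigma_1$, $\sigma \delete 1$, $\sigma \delete \sigma_{n+1}$, $\sigma \delete (n+1)$ and finding the first one with at least three components. If it is $\sigma \delete \sigma_1$, I set $h(\sigma)$ to be the unique permutation with first entry $\sigma_1$ and with $h(\sigma) \delete \sigma_1 = g^{-1}(\sigma \delete \sigma_1)$, mirroring part \ref{inj:delete pi1} of $f$'s definition; if it is $\sigma \delete 1$, I run the same recipe on $\sigma^{-1}$ and invert the result; and for the last two tiers I use the reverse-complement analogue. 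If none of the four deletions has three components, $\sigma$ is not in the image of $f$.

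Second, I would verify $(h \circ f)(\pi) = \pi$ by going through the three cases in $f$'s definition. In case \ref{inj:delete pi1}, the permutation $\sigma \delete \sigma_1 = g(\pi \delete \pi_1)$ has at least three components by construction of $g$, so $h$ correctly enters its first branch and reconstructs $\pi$. The delicate step is showing that whenever $\pi$ falls into case \ref{inj:delete 1} or \ref{inj:delete other}, the deletion $\sigma \delete \sigma_1$ has at most two components --- otherwise $h$ would take the wrong branch. This is precisely Lemma~\ref{lem:not unremovable to removable}, applied directly to $\pi$. The analogous claims needed for the second, third and fourth tiers of $h$ follow by applying the same lemma to $\pi^{-1}$, $\rc(\pi)$, and $\rc(\pi)^{-1}$, using the identities $(\pi \delete \pi_i)^{-1} = \pi^{-1} \delete i$ and $\rc(\pi \delete \pi_i) = \rc(\pi) \delete (n+1-\pi_i)$, together with Proposition~\ref{prop:forbidden removable point combinations} to rule out incompatible configurations.

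Third, once $f$ is injective, the inequality $\av_n^k(1324) \leq \av_{n+1}^k(1324)$ for $k \leq 2n - 7$ is immediate: Theorem~\ref{thm:removepoint} yields the disjoint decomposition $\Av_n^k(1324) = \mathcal{D}_n^k \sqcup \mathcal{A}_n^k$, and the images $g(\mathcal{D}_n^k)$ and $f(\mathcal{A}_n^k)$ are disjoint because the former consists of permutations with at least three components while the latter has at most two by Remark~\ref{rmk:f properties}\ref{rmk:f properties: two comps}. The main obstacle is the bookkeeping in step two: one must carefully track how $f$ interacts with $\rc$ and inversion in each case of its definition to show that Lemma~\ref{lem:not unremovable to removable} applied to $\pi$, $\pi^{-1}$, $\rc(\pi)$, $\rc(\pi)^{-1}$ delivers the correct non-decomposability statement in the corresponding branch of $h$. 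Once those four statements are in place the injectivity is formal, and the inequality follows with no extra work.
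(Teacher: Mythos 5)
Your proposal matches the paper's own proof essentially verbatim: you construct the same prioritized left inverse $h$ (delete $\sigma_1$ or $1$ first, otherwise pass to the reverse-complement), verify $(h\circ f)(\pi)=\pi$ case by case with Lemma~\ref{lem:not unremovable to removable} (applied to $\pi$ and its symmetries) guaranteeing that $h$ enters the correct branch, and separate the images of $f$ and $g$ by component counts before invoking Theorem~\ref{thm:removepoint} for the inequality. This is the paper's argument, correctly identified down to the key lemma and the symmetry bookkeeping.
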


\begin{proof}
  We will define a function \(h : f(\mathcal A_n^k) \to \mathcal A_n^k\) by reversing the steps used to define \(f\), and afterwards prove that \(h\) is the inverse of \(f\). Suppose that \(\sigma \in f(\mathcal A_n^k)\).
  \begin{itemize}
    \item If \(\sigma \delete \sigma_1\) has at least three components, let \(h(\sigma)\) be the permutation with \(h(\sigma)_1 = \sigma_1\) and \(h(\sigma) \delete \sigma_1 = g^{-1}(\sigma \delete \sigma_1)\).
    \item If \(\sigma \delete 1\) has at least three components, let \(h(\sigma) = h(\sigma^{-1})^{-1}\).
    \item Otherwise, let \(h(\sigma) = ({\rc} \circ h \circ {\rc})(\sigma)\).
  \end{itemize}
  Since at least one of the permutations \(\sigma \delete 1\), \(\sigma \delete \sigma_1\), \(\sigma \delete (n+1)\) or \(\sigma \delete \sigma_{n+1}\) has at least three components, we can always construct \(h(\sigma)\) according to the rules above. We will now prove that \(h\) is the inverse of \(f\). Let \(\pi \in \mathcal A_n^k\).
  \begin{itemize}
    \item If \(\pi \delete \pi_1\) is decomposable then \(f(\pi) \delete f(\pi)_1\) has at least three components by the definition of \(f\), and it is easy to see that \((h \circ f)(\pi) = \pi\).
    \item If \(\pi \delete 1\) is decomposable then \(f(\pi) \delete 1\) has at least three components, and \(f(\pi) \delete f(\pi)_1\) has at most two components by Lemma \ref{lem:not unremovable to removable}, so
    \begin{equation*}
      (h \circ f)(\pi) = h \big(f(\pi)^{-1}\big)^{-1} = h \big( f\big(\pi^{-1}\big) \big)^{-1} = \pi.
    \end{equation*}
    \item If \(\pi \delete 1\) and \(\pi \delete \pi_1\) are indecomposable, then both \(f(\pi) \delete 1\) and \(f(\pi) \delete f(\pi)_1\) have at most two components by Lemma \ref{lem:not unremovable to removable}. It follows that
    \begin{equation*}
      (h \circ f)(\pi) = ({\rc} \circ h \circ {\rc} \circ {\rc} \circ f \circ {\rc})(\pi) = \pi.
    \end{equation*}
  \end{itemize}

  We conclude that \(f\) has a left-inverse, and is therefore injective. All permutations in its image have at most two components by Remark \ref{rmk:f properties} \ref{rmk:f properties: two comps}, whereas the permutations in the image of \(g\) all have at least three components. Hence the images of \(f\) and \(g\) are disjoint, and the mapping
  \begin{equation*}
    \mathcal D_n^k \cup \mathcal A_n^k \longrightarrow \Av_{n+1}^k(1324)
  \end{equation*}
  given by combining \(f\) and \(g\) is injective. If \(k \leq 2n - 7\) then \(\mathcal D_n^k \cup \mathcal A_n^k = \Av_n^k(1324)\) by Theorem \ref{thm:removepoint}, so \(\av_n^k(1324) \leq \av_{n+1}^k(1324)\).
\end{proof}

\section{Enumerating the difference} \label{sec:difference}

The goal of this section is to describe the set of permutations 
\begin{equation*}
  \mathcal R_{n+1}^k \coloneqq \Av_{n+1}^k(1324) \setminus \big(g(\mathcal D_n^k) \cup f(\mathcal A_n^k)\big)
\end{equation*}
for all \(k \leq 2n - 7\). We will assume that \(k \leq 2n - 7\) throughout this section.

\(\mathcal R_{n+1}^k\) consists of the following collections:
\begin{enumerate}[(a)]
  \item \label{diffclasses: corner} Directly from the definition, we get that permutations \(\sigma \in \Av_{n+1}^k(1324)\) with \(\sigma_1 = n+1\) or \(\sigma_{n+1} = 1\). In the first case
  \begin{equation*}
    \inv(\sigma \delete \sigma_1) = k - n \leq n - 7
  \end{equation*}
  and the other case is symmetrical, so these permutations are enumerated by
  \begin{equation*}
    [x^k]\big(2 x^n P(x)^2 \big).
  \end{equation*}
  \item \label{diffclasses: 2-n+1} Permutations \(\sigma \in \Av_{n+1}^k(1324)\) with \(\sigma_2 = n+1\) or \(\sigma_{n+1} = 2\). To see this, suppose that \(\sigma = f(\pi)\) satisfies \(\sigma_{n+1} = 2\) for some \(\pi \in \mathcal A_n^k\). Then clearly \(\sigma \delete 1\) and \(\sigma \delete \sigma_1\) have at most two components, so \(\pi_n = \sigma_{n+1} - 1 = 1\) by Remark \ref{rmk:f properties} \ref{rmk:f properties: edge values}. But then \(\pi \delete 1 = \pi \delete \pi_n\) is decomposable, so \(f(\pi) \delete 1\) has at least three components and \(f(\pi) \neq \sigma\), a contradiction. The other case is symmetrical, and like we reasoned above, these permutations are enumerated by
  \begin{equation*}
    [x^k]\big(2 x^{n-1} P(x)^2 \big).
  \end{equation*}
  \item \label{diffclasses: 2 comp} Permutations \(\sigma \in \Av_{n+1}^k(1324)\) such that \(\sigma \delete 1\), \(\sigma \delete \sigma_1\), \(\sigma \delete (n+1)\) and \(\sigma \delete \sigma_{n+1}\) all have at most two components. We will show in Proposition \ref{prop:2n-9 3 components} that no such permutations exist.
  \item \label{diffclasses: last} Permutations \(\sigma \in \Av_{n+1}^k(1324)\) with \(\sigma_1, \sigma_2 \neq n+1\) and \(\sigma_{n+1} \neq 1, 2\), such that \(\sigma\), \(\sigma \delete 1\) and \(\sigma \delete \sigma_1\) all have at most two components, but \(h(\sigma) \delete 1\) or \(h(\sigma) \delete h(\sigma)_1\) are decomposable, where \(h\) naturally extends \(f^{-1}\). See Lemma \ref{lem:last part of difference} for the enumeration
  \begin{equation*}
    [x^k]\big(2 x^{n-1} P(x)^2\big)
  \end{equation*}
  of these permutations, and the paragraphs preceding it for more details along with a discussion of why this list is exhaustive.

  One could easily combine these permutations with collection \ref{diffclasses: 2-n+1}, but we find that making the distinction is more intuitive.
\end{enumerate}

\begin{lemma} \label{lem:132 decomposable}
  If \(\pi \in \Av_n^k(132)\) is indecomposable and \(k \leq 2n-5\), then \(\pi_1 = n\) or \(\pi_n = 1\).
\end{lemma}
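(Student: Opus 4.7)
The plan is to analyze the position \(i\) of the value \(n\) in \(\pi\), using the standard recursive structure of \(132\)-avoiders. Any value \(a\) appearing before \(n\) and any value \(c\) appearing after \(n\) satisfy \(a > c\), since otherwise \(a\, n\, c\) would realize a \(132\) pattern. Consequently, positions \(1, \ldots, i-1\) contain exactly the values \(\{n-i+1, \ldots, n-1\}\), while positions \(i+1, \ldots, n\) contain exactly \(\{1, \ldots, n-i\}\).

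Assuming \(\pi_1 \neq n\), so that \(i \geq 2\), I would split into three cases. If \(i = n\), then \(\pi_n = n\) and positions \(1, \ldots, n-1\) already form a permutation of \(\{1, \ldots, n-1\}\), so \(\pi = (\pi \delete n) \oplus 1\) is decomposable, contradicting indecomposability. If \(i = n-1\), then the single position after \(n\) must hold the only remaining value \(1\), giving \(\pi_n = 1\), which is the desired conclusion.

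The remaining case is \(2 \leq i \leq n-2\), where I plan to lower-bound \(k = \inv(\pi)\) by counting only the inversions \((j_1, j_2)\) with \(j_1 \leq i < j_2\). Each of the \((i-1)(n-i)\) pairs with \(j_1 < i\) is an inversion because \(\pi_{j_1} \in \{n-i+1, \ldots, n-1\}\) while \(\pi_{j_2} \in \{1, \ldots, n-i\}\), and each of the \(n-i\) pairs with \(j_1 = i\) is an inversion because \(\pi_i = n\). Summing gives at least \(i(n-i)\) inversions, and since \(i(n-i) \geq 2(n-2) = 2n-4\) on the integer interval \(2 \leq i \leq n-2\), this contradicts the hypothesis \(k \leq 2n-5\).

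There is no serious obstacle; the entire argument is essentially the observation that the recursive split at \(n\) forces \(i(n-i)\) unavoidable inversions, so pushing \(n\) strictly away from both endpoints of \(\pi\) is incompatible with having few inversions. The only content beyond bookkeeping is the elementary fact \(\min_{2 \leq i \leq n-2} i(n-i) = 2n-4\), which pinpoints why the bound \(k \leq 2n-5\) is the exact threshold that makes the case \(2 \leq i \leq n-2\) impossible.
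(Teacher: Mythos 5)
Your proof is correct, and it takes a different route from the paper's. You exploit the recursive structure of \(132\)-avoiders at the maximum: with \(i=\pi^{-1}(n)\), the values left of \(n\) are exactly \(\{n-i+1,\dots,n-1\}\) and those to the right are \(\{1,\dots,n-i\}\), so every pair straddling position \(i\) (including the pairs \((i,j)\)) is an inversion, giving \(k\geq i(n-i)\geq 2n-4\) whenever \(2\leq i\leq n-2\); the case \(i=n\) is killed by indecomposability and \(i=n-1\) forces \(\pi_n=1\). The paper instead assumes both \(\pi_1\neq n\) and \(\pi_n\neq 1\), uses \(132\)-avoidance (via the entry \(n\)) to deduce \(\pi_1>\pi_n\) and \(\pi^{-1}_1>\pi^{-1}_n\), and then counts left- and right-inversions of the four boundary points \(1\), \(n\), \(\pi^{-1}_1\), \(\pi^{-1}_n\) — the same style of counting used throughout Section~2 — to reach \(\inv(\pi)\geq 2n-4\), contradicting \(k\leq 2n-5\). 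Your version buys a slightly more structural explanation of the threshold (the minimum of \(i(n-i)\) over \(2\leq i\leq n-2\) is exactly \(2n-4\), so few inversions force the maximum to sit at an end or one step from it), at the cost of a short case analysis; the paper's version buys uniformity with its other lemmas and avoids cases by contradicting both disjuncts at once. Both arguments are sharp at the same bound, so neither gives a stronger statement.
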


\begin{proof}
  Suppose that \(\pi_1 \neq n\) and \(\pi_n \neq 1\). Since \(\pi\) is indecomposable, \(\pi_n \neq n\). It follows that \(\pi_1 > \pi_n\), since \(\pi_1 n \pi_n\) forms a \(132\)-pattern otherwise. Similarly \(\pi^{-1}_1 > \pi^{-1}_n\) to avoid the \(132\)-pattern \(1 n \pi_n\). Counting inversions like in Section \ref{sec:almost decomposable} gives
  \begin{equation*}
      \inv(\pi) \geq \pi_1 - 1 + n - \pi_n + \pi^{-1}_1 - 1 + n - \pi^{-1}_n - 2 \geq 2n - 4. \qedhere
  \end{equation*}
\end{proof}

\begin{lemma} \label{lem:2n-9 2 to 5 components}
  If \(\sigma \in \Av_m^k(1324)\), \(k \leq 2m-9\), and \(\sigma \delete \sigma_1\) has exactly two components, then \(\sigma \delete \sigma_m\) or \(\sigma \delete m\) has at least five components.
\end{lemma}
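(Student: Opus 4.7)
I would start by writing $\sigma \delete \sigma_1 = \tau^{(1)} \oplus \tau^{(2)}$ with $|\tau^{(1)}| = a$ and $|\tau^{(2)}| = b = m - 1 - a$, both indecomposable; by \eqref{eq:1324 form}, $\tau^{(1)}$ is then a 132-avoider and $\tau^{(2)}$ is a 213-avoider. Setting $k_i = \inv(\tau^{(i)})$, position 1 of $\sigma$ contributes exactly $\sigma_1 - 1$ right-inversions and no inversion spans the two components of $\sigma \delete \sigma_1$, so $\inv(\sigma) = k_1 + k_2 + \sigma_1 - 1$. Combining this with $k_i \geq |\tau^{(i)}| - 1$ (indecomposability) and $k \leq 2m - 9$ gives $\sigma_1 \leq m - 5$.

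The main case is $\sigma_1 \geq a + 2$, where a prefix-sum check shows $\sigma$ is itself indecomposable and the inversion bound sharpens to $k_2 \leq 2b - 7$ (which also forces $b \geq 6$). By the reverse-complement analogue of Lemma~\ref{lem:132 decomposable} applied to $\tau^{(2)}$, either $\tau^{(2)}_1 = b$ or $\tau^{(2)}_b = 1$. Suppose $\tau^{(2)}_1 = b$: then $m$ sits at position $a + 2$ in $\sigma$, and since deleting the max of $\tau^{(2)}$ removes exactly its $b - 1$ right-inversions,
\begin{equation*}
\comp(\tau^{(2)} \delete b) \;\geq\; 2b - 2 - k_2 \;\geq\; \sigma_1 - a + 3.
\end{equation*}
I would then observe that position $a + 1 + j$ of $\sigma \delete m$ is a decomposition point precisely when $j$ is a decomposition point of $\tau^{(2)} \delete b$ with $j \geq \sigma_1 - a - 1$ (the threshold for the prefix of $\sigma \delete m$ to contain the value $\sigma_1$ sitting at position 1). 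If $\comp(\sigma \delete m) \leq 4$, then at most three such $j$ exist, so the first $\comp(\tau^{(2)} \delete b) - 4$ components of $\tau^{(2)} \delete b$ must fit into positions $1, \ldots, \sigma_1 - a - 2$; since each has size at least one, $\comp(\tau^{(2)} \delete b) \leq \sigma_1 - a + 2$, contradicting the bound above.

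The symmetric sub-case $\tau^{(2)}_b = 1$ is handled the same way via $\sigma \delete \sigma_m$ (whose last value is now the small entry $a + 1$), with the degenerate boundary $\sigma_1 = a + 2$ producing an immediate split of $\sigma \delete \sigma_m$ at position $a + 1$ and hence directly at least $1 + \comp(\tau^{(2)} \delete 1) \geq 6$ components. The main obstacle I expect is the decomposable regime $\sigma_1 \leq a + 1$, where $\sigma = \rho \oplus \tau^{(2)}$ with $\rho$ of length $a + 1$: the sub-case $\sigma_1 = 1$ can be eliminated for $a \geq 2$ by showing via \eqref{eq:1324 form} that any indecomposable $\tau^{(1)}$ of length $\geq 2$ forces a spanning 213 in $\tau^{(1)} \oplus \tau^{(2)}$ (equivalently, a 1324 in $\sigma$ with $\sigma_1$ as its smallest entry), while the remaining configurations require combining $\comp(\rho)$ with $\comp(\tau^{(2)} \delete \cdot)$ and invoking Lemma~\ref{lem:132 decomposable} on the 132-avoider $\rho$ to refine its structure (in particular, $\rho_1 = a + 1$ or $\rho_{a+1} = 1$).
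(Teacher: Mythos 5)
Your main case \(\sigma_1 \ge a+2\) is correct and is essentially the paper's own proof: writing \(\delta = \sigma_1 - a\), the paper derives \(\inv\big(\tau^{(2)}\big) \le 2b-5-\delta\) by the same inversion count, applies Lemma~\ref{lem:132 decomposable} to the \(213\)-avoider \(\tau^{(2)}\) via reverse-complement, gets \(\comp\big(\tau^{(2)} \delete b\big) \ge \delta + 3\), and concludes that putting \(\sigma_1\) back in front merges at most \(\delta\) of these components. Your decomposition-point criterion at positions \(a+1+j\) with \(j \ge \sigma_1 - a - 1\) is exactly that last step made explicit, and the sub-case \(\tau^{(2)}_b = 1\) via \(\sigma \delete \sigma_m\) is handled the same way in the paper.

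The regime you defer, \(2 \le \sigma_1 \le a\), is a genuine gap, and it cannot be closed along the lines you sketch, because the statement as written is false there. Take \(\sigma = 2\,3\,1\,8\,4\,5\,7\,6 = 231 \oplus 51243\), so \(m = 8\): it avoids \(1324\) by \eqref{eq:1324 form} (here \(231\) avoids \(132\) and \(51243\) avoids \(213\)), it has \(\inv(\sigma) = 7 = 2m-9\), and \(\sigma \delete \sigma_1 = 21 \oplus 51243\) has exactly two components; yet \(\sigma \delete 8 = 2314576\) has four components and \(\sigma \delete \sigma_8 = 2317456\) has two. (The other deferred configurations are harmless: \(\sigma_1 = a+1\) gives \(\inv\big(\tau^{(2)}\big) \le 2b-6\), hence \(\comp\big(\tau^{(2)} \delete \cdot\big) \ge 4\) and \(\sigma \delete m\) or \(\sigma \delete \sigma_m = \rho \oplus \big(\tau^{(2)} \delete \cdot\big)\) has at least five components, while \(\sigma_1 = 1\) forces \(a = 1\) as you note and then at least \(2 + 3\) components.) The lemma is really only true, and only needed, when \(\sigma\) is indecomposable, which for \(\sigma \delete \sigma_1\) of this shape is equivalent to \(\sigma_1 \ge a+2\): Proposition~\ref{prop:2n-9 3 components} invokes it only for indecomposable \(\sigma\), and the paper's own proof tacitly assumes \(\sigma_1 > a\) in its final step (the claim that re-inserting \(\sigma_1\) combines \(\delta\) components breaks down for \(\delta \le 0\)). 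So in the part you flag as the main obstacle the right move is to add the indecomposability hypothesis, not to try to prove it; your completed case is already the whole argument.
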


\begin{proof}
  Write \(\sigma \delete \sigma_1 = \sigma^{(1)} \oplus \sigma^{(2)}\) and let \(\delta = \sigma_1 - |\sigma^{(1)}|\). Since \(\sigma^{(1)}\) is indecomposable we have \(\inv \big(\sigma^{(1)}\big) \geq |\sigma^{(1)}| - 1\), and therefore
  \begin{align*}
    \inv \big( \sigma^{(2)} \big) &= k - (\sigma_1 - 1) - \inv \big( \sigma^{(1)}\big) \\
    &\leq 2m-9 - |\sigma^{(1)}| - \delta + 1 - |\sigma^{(1)}| + 1 \\
    &= 2\big(m - |\sigma^{(1)}|\big) - 7 - \delta \\
    &= 2|\sigma^{(2)}| - 5 - \delta. 
  \end{align*}
  Since \(\sigma^{(2)}\) is \(213\)-avoiding, \(\sigma^{(2)}_1 = |\sigma^{(2)}|\) or \(\sigma^{(2)}_{|\sigma^{(2)}|} = 1\) by Lemma \ref{lem:132 decomposable}, using the fact \(213 = \rc(132)\). In the first case
  \begin{align*}
    \comp \big(\sigma^{(2)} \delete \sigma^{(2)}_1\big) &\geq |\sigma^{(2)} \delete \sigma^{(2)}_1| - \inv\big(\sigma^{(2)} \delete \sigma^{(2)}_1\big) \\
    &\geq |\sigma^{(2)}| - 1 - \big( 2|\sigma^{(2)}| - 5 - \delta - |\sigma^{(2)}| + 1\big) \\
    &= \delta + 3.
  \end{align*}
  Denoting \(\tau = \sigma \delete \sigma_1\), this shows that \(\tau \delete (m-1                                                            )\) has at least \(\delta + 4\) components, the first of which is \(\sigma^{(1)}\). Since \(\sigma_1 = |\sigma^{(1)}| + \delta\), `placing back' \(\sigma_1\) in front of \(\tau \delete (m-1)\) combines \(\delta\) components into one, so we conclude that \(\sigma \delete m\) has at least five components. The case \(\sigma^{(2)}_{|\sigma^{(2)}|} = 1\) is similar.
\end{proof}

\begin{proposition} \label{prop:2n-9 3 components}
  If \(\sigma \in \Av_m^k(1324)\) has at most two component and \(k \leq 2m-9\), then at least one of the permutations \(\sigma \delete 1\), \(\sigma \delete m\), \(\sigma \delete \sigma_1\), \(\sigma \delete \sigma_m\) has at least three components.
\end{proposition}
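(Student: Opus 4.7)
The plan is to split on $\comp(\sigma) \in \{1, 2\}$ and in both cases upgrade the ``at least two components'' guarantee produced by Theorem~\ref{thm:removepoint} or by the direct-sum structure to ``at least three'' via Lemma~\ref{lem:2n-9 2 to 5 components}.

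If $\sigma$ is indecomposable, then the bound $k \leq 2m - 9 \leq 2m - 7$ puts us in the scope of Theorem~\ref{thm:removepoint}, so $\sigma$ is almost decomposable: one of $\sigma \delete 1$, $\sigma \delete m$, $\sigma \delete \sigma_1$, $\sigma \delete \sigma_m$ is decomposable. The symmetries of inversion and reverse-complement permute these four options while preserving both the hypothesis and the desired conclusion, so without loss of generality I may take $\sigma \delete \sigma_1$ to be the decomposable one. If it already has at least three components we are done; otherwise it has exactly two, and Lemma~\ref{lem:2n-9 2 to 5 components} immediately delivers $\sigma \delete \sigma_m$ or $\sigma \delete m$ with at least five components.

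If instead $\sigma = \sigma^{(1)} \oplus \sigma^{(2)}$ with both factors indecomposable of lengths $n_1, n_2 \geq 1$, then $\comp(\sigma) + \inv(\sigma) \geq m$ combined with $k \leq 2m - 9$ forces $m \geq 7$, so at least one of $n_1, n_2$ is $\geq 2$. When $n_1 \geq 2$, the deletion $\sigma \delete \sigma_1 = (\sigma^{(1)} \delete \sigma^{(1)}_1) \oplus \sigma^{(2)}$ has at least two components automatically, and the same dichotomy (either $\geq 3$ components and we are done, or exactly $2$ and Lemma~\ref{lem:2n-9 2 to 5 components} finishes the job) concludes the argument. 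The case $n_2 \geq 2$ is entirely symmetric under reverse-complementation.

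The main point requiring care, in my view the only genuine obstacle, is the singleton subcase, say $n_1 = 1$, where $\sigma \delete 1 = \sigma \delete \sigma_1 = \sigma^{(2)}$ has just one component. Here the reverse-complement form of Lemma~\ref{lem:2n-9 2 to 5 components} says that if $\sigma \delete \sigma_m$ had exactly two components, then $\sigma \delete 1$ or $\sigma \delete \sigma_1$ would have at least five components; since both equal the indecomposable $\sigma^{(2)}$, this is impossible. Hence $\sigma \delete \sigma_m = 1 \oplus (\sigma^{(2)} \delete \sigma^{(2)}_{n_2})$ must have at least three components outright, closing the last remaining case.
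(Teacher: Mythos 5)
Your indecomposable case coincides with the paper's argument (almost-decomposability from Theorem \ref{thm:removepoint}, a symmetry reduction, then Lemma \ref{lem:2n-9 2 to 5 components}) and is fine. The genuine gap is in the decomposable case, where you diverge from the paper: the paper does \emph{not} reuse Lemma \ref{lem:2n-9 2 to 5 components} there, but instead notes that some component satisfies \(\inv\big(\sigma^{(i)}\big) \leq 2|\sigma^{(i)}|-5\) (else \(k \geq 2m-8\)) and applies Lemma \ref{lem:132 decomposable} to it, so that deleting its corner entry removes \(|\sigma^{(i)}|-1\) inversions and leaves at least three components inside that component alone. Lemma \ref{lem:2n-9 2 to 5 components}, although stated without an indecomposability hypothesis, is only sound when \(\sigma_1\) exceeds the length of the first component of \(\sigma \delete \sigma_1\) (automatic when \(\sigma\) is indecomposable, which is the only way the paper uses it): its proof needs \(\delta = \sigma_1 - |\sigma^{(1)}| \geq 0\) both to apply Lemma \ref{lem:132 decomposable} to \(\sigma^{(2)}\) and in the final step where reinserting \(\sigma_1\) ``combines \(\delta\) components''. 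For decomposable \(\sigma\) one can have \(\delta < 0\) and the lemma's conclusion is simply false, so both branches of your decomposable case rest on false intermediate statements.

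Concretely, \(\sigma = 2345176 = 23451 \oplus 21 \in \Av_7^5(1324)\) with \(k = 5 = 2m-9\) lies in your \(n_1 \geq 2\) branch: \(\sigma \delete \sigma_1 = 2341 \oplus 21\) has exactly two components, yet \(\sigma \delete \sigma_7 = \sigma \delete 6\) and \(\sigma \delete 7\) both equal \(23451 \oplus 1\), with two components, not five (the proposition holds here only because \(\sigma \delete 1\) has five components, a deletion your branch never considers). Likewise \(\sigma = 18234567 = 1 \oplus 7123456 \in \Av_8^6(1324)\) lies in your singleton branch: \(\sigma \delete \sigma_8 = \sigma \delete 7 = 1 \oplus 612345\) has exactly two components while \(\sigma \delete 1 = \sigma \delete \sigma_1\) is indecomposable, so the rc-form of the lemma you invoke fails outright, and your conclusion that \(\sigma \delete \sigma_m\) has at least three components is false (here the proposition is saved by \(\sigma \delete 8 = 1234567\)). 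So the decomposable case genuinely needs a separate argument along the lines of the paper's use of Lemma \ref{lem:132 decomposable}, not a reduction to Lemma \ref{lem:2n-9 2 to 5 components}.
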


\begin{proof}
  If \(\sigma = \sigma^{(1)} \oplus \sigma^{(2)}\), then \(\inv \big(\sigma^{(i)} \big) \leq 2n - 5\) for \(i = 1\) or \(i = 2\), and Lemma \ref{lem:132 decomposable} proves the claim. If \(\sigma\) is indecomposable, then it is almost decomposable, i.e.\ one of the entries \(i \in \{1,m,\sigma_1,\sigma_m\}\) satisfies \(\comp(\sigma \delete i) \geq 2\). If \(\sigma \delete i\) has exactly two components, then (taking the inverse and reverse-complement if necessary) Lemma \ref{lem:2n-9 2 to 5 components} proves that another entry \(j \in \{1,m,\sigma_1,\sigma_m\}\) satisfies the claim.
\end{proof}

We conclude by motivating the class \ref{diffclasses: last}. To understand what these permutations are, consider the set \(\mathcal Q_{n+1}^k\) of all permutations \(\sigma \in \Av_{n+1}^k(1324)\) with at most two components and
\begin{equation*}
  \sigma_1, \sigma_2 \neq n+1, \qquad \sigma_{n+1} \neq 1,2.
\end{equation*}
All permutations of \(\mathcal R_{n+1}^k\), except for those belonging to classes \ref{diffclasses: corner}, \ref{diffclasses: 2-n+1} or \ref{diffclasses: 2 comp}, must be contained in \(\mathcal Q_{n+1}^k\). In light of Proposition \ref{prop:2n-9 3 components}, we can extend \(f^{-1}\) to a surjection \(h: \mathcal Q_{n+1}^k(1324) \to \mathcal A_n^k\), defined exactly in the same way as in the proof of Theorem \ref{thm:injective}, but on the larger domain. 

Since \(h\) is a surjection, the remaining part of \(\mathcal R_{n+1}^k\) consists exactly of the permutations \(\sigma \in \mathcal Q_{n+1}^k\) such that \((f \circ h)(\sigma) \neq \sigma\). This can only happen in the following way: \(\sigma \delete 1\) and \(\sigma \delete \sigma_1\) have at most two components, but \(h(\sigma) \delete 1\) or \(h(\sigma) \delete h(\sigma)_1\) is decomposable; then \((f \circ h)(\sigma) \delete 1\) or \((f \circ h)(\sigma) \delete (f \circ h)(\sigma)_1\) has at least three components and \((f \circ h)(\sigma) \neq \sigma\). This is exactly the class \ref{diffclasses: last}.

In other words, using symmetry, we need to count the number of indecomposable permutations \(\pi \in \Av_n^k(1324)\) with \(\pi_1 \neq n\) and \(\pi_n \neq 1\), such that \(\pi \delete \pi_1\) and \(\pi \delete \pi_n\) (or \(\pi \delete n\)) are decomposable, but \(f(\pi) \delete f(\pi)_{n+1}\) 5(or \(f(\pi) \delete (n+1)\)) has at most two components. 

\begin{lemma} \label{lem:last part of difference}
  The number of indecomposable permutations \(\pi \in \Av_n^k(1324)\), \(k \leq 2n - 7\), with \(\pi_1 \neq n\) and \(\pi_n \neq 1\), such that \(\pi \delete \pi_1\) and \(\pi \delete \pi_n\) are decomposable but \(f(\pi) \delete f(\pi)_{n+1}\) has at most two components, equals 
  \begin{equation*}
    [x^k] \big( x^{n-1} P(x)^2 \big).
  \end{equation*}
  Furthermore, there exists no indecomposable permutation \(\pi \in \Av_n^k(1324)\) such that \(\pi \delete \pi_1\) and \(\pi \delete n\) are decomposable, but \(f(\pi) \delete (n+1)\) has at most two components.
\end{lemma}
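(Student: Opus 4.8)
\textbf{Proof plan for Lemma \ref{lem:last part of difference}.}

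The plan is to set up a bijection between the permutations being counted and a combinatorial object enumerated by $x^{n-1}P(x)^2$, which suggests that $\pi$ should end up looking like a $132$-avoider direct-summed with a $213$-avoider, shifted by a single prefix inversion. First I would analyze the structure forced by the hypotheses in the principal case, where $\pi \delete \pi_1$ and $\pi \delete \pi_n$ are both decomposable. Since $\pi \delete \pi_1$ is decomposable while $\pi$ is not, Proposition \ref{prop:forbidden removable point combinations} tells us $\pi \delete 1$ is indecomposable, and symmetrically $\pi \delete n$ is indecomposable; combined with $\pi_1 \neq n$, $\pi_n \neq 1$, this pins down which of the four deletion conditions hold. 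Writing $\pi \delete \pi_1 = \rho \oplus \rho'$ with $\rho$ indecomposable, we have $\pi_1 \le |\rho|+1$ and $\pi^{-1}_1 > |\rho|+1$ as in Proposition \ref{prop:forbidden removable point combinations}; the key extra input is that $f(\pi) \delete f(\pi)_{n+1}$ has at most two components. By Remark \ref{rmk:f properties}\ref{rmk:f properties: edge values} and the definition of $f$ in case \ref{inj:delete pi1}, $f(\pi)\delete f(\pi)_{n+1}$ is essentially $g(\pi \delete \pi_1)$ with its last entry deleted, so "at most two components" forces the extra inserted component in $g$ to merge back, meaning the $213$-avoiding second component $\rho'$ must itself have $\rho'_{|\rho'|}=1$ (its last entry is the global minimum of that block), or be empty. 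This should collapse the structure of $\pi$ to: $\pi_1$ large, $\pi$ restricted to positions $2,\dots$ being a direct sum whose first block is $132$-avoiding (coming from $\rho$, forced by $1324$-avoidance below $\pi_1$) and whose tail is $213$-avoiding and decomposes appropriately.

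Next I would make this structural description precise enough to count. The permutations in question should be in bijection with pairs consisting of a $132$-avoider and a $213$-avoider whose combined inversions (plus the $n-1$ inversions created by placing $\pi_1$ at the front, which must equal a prescribed amount) total $k$; the $x^{n-1}$ accounts for the forced prefix inversions, and each $P(x)$ counts one of the two avoiders by inversion number via the inversion-table argument from the introduction (valid because the relevant lengths exceed twice the number of inversions plus a constant, which follows from $k \le 2n-7$). I would verify both that every permutation of the desired type arises this way and that distinct pairs give distinct $\pi$, and check that the length/inversion bookkeeping is consistent with using $\av^j(132)=p(j)$ in the stable range. The coefficient extraction $[x^k]\big(x^{n-1}P(x)^2\big)$ then drops out.

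For the second, negative assertion — no indecomposable $\pi$ has $\pi \delete \pi_1$ and $\pi \delete n$ decomposable while $f(\pi)\delete(n+1)$ has at most two components — I would argue by contradiction using Proposition \ref{prop:forbidden removable point combinations} (applied to $\pi^{-1}$, since $\pi \delete n$ decomposable means $\pi^{-1}\delete (n)$... more precisely $\pi\delete n$ decomposable forces $\pi \delete \pi_n$ indecomposable) together with the argument already run inside the proof of Lemma \ref{lem:not unremovable to removable}: there it is shown that if $\pi \delete \pi_1$ is indecomposable and $\pi \delete n$ is decomposable, then $f(\pi)\delete f(\pi)_1$ having at least three components forces $\pi \delete \pi_1$ decomposable. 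The mirror/inverse of that computation should show that $\pi \delete \pi_1$ decomposable is incompatible with $f(\pi)\delete(n+1)$ failing to inherit three components, i.e. the $n+1$ deletion must produce at least three components whenever $\pi \delete n$ is decomposable and $f$ was defined via case \ref{inj:delete pi1}. The main obstacle I anticipate is the first case: carefully tracking exactly which component of $g(\pi\delete\pi_1)$ survives the deletion of $f(\pi)_{n+1}$ and translating "at most two components of $f(\pi)\delete f(\pi)_{n+1}$" back into a clean closed condition on $\rho'$, since the interaction between the $g$-insertion, the $\pi_1$-prefix shift, and the last-entry deletion is exactly where the constant $2$ in $x^{n-1}$ versus a potential $x^{n}$ or $2x^{n-1}$ gets decided.
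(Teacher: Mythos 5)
You have the right overall shape (structural forcing plus a bijection producing \([x^k]x^{n-1}P(x)^2\)), but the crucial step is exactly the one you leave open and even flag as your "main obstacle". The paper's proof deletes both boundary entries at once: setting \(\tau = \pi \delete \{\pi_1,\pi_n\}\) and writing \(\tau = \tau^{(1)}\oplus 1\oplus\ldots\oplus 1\oplus\tau^{(2)}\), it shows that the hypotheses force \(\pi_1\) and \(\pi_n\) to be \emph{consecutive in value}, \(\pi_1=\pi_n+1\), located directly below all values of \(\tau^{(2)}\) and above everything else, with \(\pi_1\) in front and \(\pi_n\) at the end. That precise joint placement is what makes every other entry of \(\pi\) form exactly one inversion with the pair \(\{\pi_1,\pi_n\}\), so \(\inv(\tau)=k-(n-1)\), and what makes \(\tau\mapsto\pi\) a bijection with \(\Av_{n-2}^{\,k-n+1}(1324)\), whose size is \([x^{k-n+1}]P(x)^2\) because \(k\le 2n-7\) puts it in the stable range. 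Your derived condition (the last block of \(\pi\delete\pi_1\) ends in its minimum, or is empty) is a consequence of this configuration but not equivalent to it: you never pin down the value of \(\pi_1\) relative to that block, and without that neither injectivity/surjectivity of your correspondence nor the inversion count follows. Relatedly, your bookkeeping "the \(n-1\) inversions created by placing \(\pi_1\) at the front" is wrong as stated: prepending \(\pi_1\) creates \(\pi_1-1\) inversions, which equals \(n-1\) only if \(\pi_1=n\), excluded by hypothesis; the \(n-1\) comes from \(\pi_1\) and \(\pi_n\) jointly in the forced picture.

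For the second assertion, your reduction does not go through as described. Lemma \ref{lem:not unremovable to removable} assumes \(\pi\delete\pi_1\) is \emph{in}decomposable and concerns \(f(\pi)\delete f(\pi)_1\); here \(\pi\delete\pi_1\) is decomposable (that is why \(f\) is defined through its first case) and the relevant object is \(f(\pi)\delete(n+1)\). What must be proved is that for every indecomposable \(\pi\) with \(\pi\delete\pi_1\) and \(\pi\delete n\) both decomposable, \(f(\pi)\delete(n+1)\) necessarily has at least three components; the paper gets this by the same placement analysis as in the first part (tracking where the maximum of \(\pi\) sits relative to the cut of \(\pi\delete n\), and how the prepended \(\pi_1\) and the singleton inserted by \(g\) interact with deleting \(n+1\)), not by mirroring Lemma \ref{lem:not unremovable to removable}. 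Proposition \ref{prop:forbidden removable point combinations} only yields that \(\pi\delete\pi_n\) is indecomposable, which by itself produces no contradiction. So both halves of your plan still need the substantive structural argument.
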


\begin{proof}
  Assume that \(\pi \delete \pi_1\) and \(\pi \delete \pi_n\) are decomposable, and write \(\tau = \pi \delete \{\pi_1, \pi_n\} = \tau^{(1)} \oplus 1 \oplus \ldots \oplus 1 \oplus \tau^{(2)}\), where \(\tau^{(1)}\) and \(\tau^{(2)}\) are indecomposable. If \(f(\pi) \delete f(\pi)_{n+1}\) has at most two components, then \(\pi_1\) and \(\pi_n\) must be placed as in the following picture. 
  \begin{center}
    \begin{tikzpicture}[scale=0.9333]
      \clip (1.5,2.75) rectangle (10,9.4);
    
      \fill[red!40] (2.75,6) rectangle (8.75,7);

      \draw[fill=blue!40] (3.25,3) rectangle (5.25,5);
      \draw[fill=blue!40] (6.25,7) rectangle (8.25,9);

      \draw 
        (2.75,6) -- (8.75,6)
        (2.75,6.5) -- (8.75,6.5)
        (2.75,7) -- (8.75,7);

      \draw (2.75,3) rectangle (8.75,9);

      \node at (3,6.75-0.004) {\pt};
      \node at (8.5,6.25-0.004) {\pt};
      
      \node at (4.25,4) {\(\tau^{(1)}\)};
      \node at (7.25,8) {\(\tau^{(2)}\)};
      \node at (2.3,6.7) {\(\pi_1\)};
      \node at (9.2,6.2) {\(\pi_n\)};

      \node at (5.45,5.2) {\pt};
      \node at (6.05,5.8) {\pt};
      \foreach \d in {0,0.1,0.2}
        \node[dot, minimum size=1pt] at (5.65+\d,5.4+\d) {};
    \end{tikzpicture}
  \end{center}
  Note that \(\inv(\tau) = \inv(\pi) - (n-1)\), since \(\pi_1\) and \(\pi_n\) form  exactly one inversion with every other entry of \(\pi\), and one inversion with each other.
  
  Conversely, any permutation \(\tau \in \Av_{n - 2}^{k - n + 1}(1324)\) gives rise to a unique element of \(\Av_n^k(1324)\) with the desired properties in the same way. We have \(n - 2 - (k - n + 1) \geq 2\), so 
  \begin{equation*}
    \av_{n - 2}^{k - n + 1}(1324) = [x^{k - n + 1}] \big(P(x)^2 \big),
  \end{equation*}
  proving the first part. The second claim follows from a similar argument.
\end{proof}

Lemma \ref*{lem:last part of difference} together with the earlier discussion in this section proves that
\begin{equation*}
  |\mathcal R_{n+1}^k| = \av_{n+1}^k(1324) - \av_n^k(1324) = [x^k] \big(2(2 + x)x^{n-1} P(x)^2 \big) \eqqcolon [x^k] R_n(x)
\end{equation*}
whenever \(k \leq 2n - 7\). It follows that
\begin{align*}
  \av_n^k(1324) &= [x^k]\big(P(x)^2\big) - \sum_{i \leq k} [x^i] R_n(x) \\
  &= [x^k] \left(P(x)^2 - \frac{R_n(x)}{1-x} \right) \\
  &= [x^k] \left( \frac{1-x - 2(2 + x)x^{n-1}}{1-x} P(x)^2 \right),
\end{align*}
concluding the proof of Theorem \ref{thm:1324}.

\section{Further directions and conjectures} \label{sec:discussion}

This section contains discussion pertaining to three separate topics: extending our method; repeated differences of the numbers \(\av_n^k(1324)\); and the unimodality of the sequences
\begin{equation*}
  \av_n^0(1324), \ \ \av_n^1(1324), \ \ \ldots, \ \ \av_n^{\binom n 2}(1324).
\end{equation*}
Unimodality could improve the upper bound for \(L(1324)\) that Conjecture \ref{conj:1324anders} gives.

\subsection{Extending almost-decomposability}

A natural idea is to delete more than one point from the boundary of the plot of a permutation to make it decomposable. For example, one could imagine handling our earlier counterexample \(\pi = 3612745\) by deleting entries \(1\) and \(2\). 

However, some permutations with as few as \(2n-5\) inversions behave poorly in this respect. Consider the permutation
\begin{equation*}
  34 \ldots (n-4) \ \ (n-2) 1 n 2 (n-1) (n-3).
\end{equation*}
Here is the first such permutation.
\begin{center}
  \begin{tikzpicture}[scale=0.4]
    \draw[step=1, thick, \gridcolor] (0.5,0.5) grid (7.5,7.5);
    \foreach \x\y in {1/3,2/5,3/1,4/7,5/2,6/6,7/4}
      \node at (\x,\y-0.004) {\pt};
    \foreach \x\y in {1/1,1/2,2/1,2/2,2/4,4/2,4/4,4/6,6/4}
      \node[dot, blue!40, minimum size=6pt] at (\x,\y) {};
    \node at (-3.9,4) {\(3517264 \quad = \)};
  \end{tikzpicture}
\end{center}
In this case, it is difficult to see which entries should be deleted. The permutation \(\pi \delete \{1,2\}\) is certainly decomposable, applying \(g\) to it and inserting back the entries \(1\) and \(2\) yields a permutation containing \(1324\). The problem is that the entries \(1\) and \(2\) are nonadjacent, and the same issue arises very often in permutations where this is the case. One could instead try to delete points different pairs of points, such as \(n\) and \(\pi_n\), but it is hard to imagine that the resulting mapping could be injective.

With this method it should, still, be possible to improve the upper bound from \(k \leq 2n - 7\) to \(k \leq 2n - 6\), since the permutations in \(\Av_n^{2n - 6}(1324)\) that are neither decomposable nor almost decomposable are those for which Lemma \ref{lem:no corner and both sides} barely fails; all other intermediate results in Section \ref{sec:almost decomposable} still apply. All such permutations are similar to \(3612745\).

\subsection{Repeated differences}

One way to prove Conjecture \ref{conj:1324anders} would be understanding the numbers $\av^n_k(1324)$ also for $k>2n-7$.  Studying the numbers when $k\le 3n-15$ we have found a tantalising pattern. We here extend the study of differences from Theorem \ref{thm:1324} to repeated differences. In what follows we will write $\av_n^k$ for $\av_n^k(1324)$. For $n\ge 10$ it seems for example to always be the case that
\[(\av_{n+3}^{2n-3}-\av_{n+2}^{2n-3})-(\av_{n+2}^{2n-4}-\av_{n+1}^{2n-4})-\big((\av_{n+2}^{2n-5}-\av_{n+1}^{2n-5})-(\av_{n+1}^{2n-6}-\av_{n}^{2n-6})\big)=4.\]
In fact, for a fixed $r\ge 0$, the following always seems to be constant for $n\ge 10+r$:
\begin{equation}\label{eq:3diff}
  \begin{aligned}
    b_{r,n} &\coloneqq (\av_{n+3}^{2n+r-3}-\av_{n+2}^{2n+r-3})-(\av_{n+2}^{2n+r-4}-\av_{n+1}^{2n+r-4}) \\
    &\quad -\big((\av_{n+2}^{2n+r-5}-\av_{n+1}^{2n+r-5})-(\av_{n+1}^{2n+r-6}-\av_{n}^{2n+r-6})\big).
  \end{aligned}
\end{equation}
 
Anders Claesson has kindly provided us with data of $\av_n^k$ for $k,n\le 45$ and with these we may compute numbers $b_{r,n}$ up to $r=9$ as $4,8,14,28,52,88,150,244,390,612$ and we offer the following conjecture.

\begin{conjecture}\label{conj:3diff} 
  The numbers $b_{r,n}$ are equal for a fixed $r$ with $n\ge 10+r $ (call them $b_r$) and they satisfy
  \begin{equation*}
      \sum_{r \geq 0} b_rx^r = \frac{2(1+x)(2-x^2)}{1-x}P(x)^2,
  \end{equation*}
  where $P(x)$ is again the generating function for the partition numbers.
\end{conjecture}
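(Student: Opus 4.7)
The first useful observation is that $b_{r,n}$ is insensitive to the leading contribution captured by Theorem \ref{thm:1324}. Writing $\av_{n+1}^k - \av_n^k = [x^k]R_n(x) + g(n,k)$, where $g(n,k) = 0$ for $k \leq 2n - 7$, one checks directly that the four $R$-terms appearing in $b_{r,n}$ cancel: since $[x^k]R_n(x)$ depends only on $k - n$, and the four pairs $(n_i, k_i)$ underlying $b_{r,n}$ yield the differences $k_i - n_i$ equal to $n + r - 5$, $n + r - 5$, $n + r - 6$, $n + r - 6$ with signs $+, -, -, +$, each value appears twice with opposite signs and the contributions cancel. The conjecture is therefore really a statement about the corrections $g(n,k)$ in the regime $2n - 7 < k \leq 2n + r - 3$, and the task reduces to understanding $1324$-avoiders in this extended range.

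The natural plan is to extend the structural analysis of Sections \ref{sec:almost decomposable}--\ref{sec:difference} to larger $k$. I would introduce a notion of \emph{$2$-almost-decomposable} permutation -- indecomposable, not almost decomposable, but becoming decomposable after the deletion of some pair of entries taken from $\{1, n, \pi_1, \pi_n\}$ -- and prove a generalization of Theorem \ref{thm:removepoint} stating that for $k$ bounded by a suitable linear function of $n$ (one would expect slope $3$, matching the empirical range $k \leq 3n - 15$), every element of $\Av_n^k(1324)$ is decomposable, almost decomposable, or $2$-almost decomposable. The inversion-counting lemmas of Section \ref{sec:almost decomposable} should admit multi-point analogues, with the northwestern/southeastern region analysis extended to richer patterns of boundary deletions. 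A parallel extension of the injection $f$ and the enumeration of $\mathcal R_{n+1}^k$ would then yield an explicit formula for $\av_n^k(1324)$ on the enlarged range, from which $b_{r,n}$ can be computed directly. The prefactor $2(1+x)(2-x^2)$ in the conjectured generating function is suggestive: the $2$ and $(1+x)$ are reminiscent of the $2(2+x)$ appearing in $R_n(x)$ and should encode a symmetric choice of two boundary deletions, whereas the $(2 - x^2)$ factor likely reflects a cancellation between overcounted $2$-almost-decomposable cases.

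The principal obstacle is the structural classification itself. As discussed at the end of Section \ref{sec:difference}, already the step to $k \leq 2n - 6$ admits counterexamples to almost-decomposability, and the permutation $34\cdots(n-4)(n-2)1n2(n-1)(n-3)$ shows that $2$-almost-decomposability with non-adjacent deleted entries behaves poorly: deleting $\{1,2\}$ from $\pi$ does give a decomposable permutation, but composing with $g$ reinserts a $1324$-pattern, so the naive extension of the injection fails. Working out a correct injection and verifying its disjointness with $f$ and $g$ looks considerably more delicate than in the $k \leq 2n - 7$ case, and may require splitting the $2$-almost-decomposable permutations into several subclasses distinguished by the positions of the deleted boundary entries. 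A secondary difficulty is matching the explicit generating function: even once a structural classification is achieved, summing the contributions of the various cases into $\frac{2(1+x)(2-x^2)}{1-x}P(x)^2$ will require careful bookkeeping, and the compactness of this formula suggests that an underlying bijective or identity-based explanation may exist -- identifying it might be the fastest route to the conjecture.
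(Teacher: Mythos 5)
The statement you were asked to prove is Conjecture \ref{conj:3diff}, which the paper itself does not prove: it is offered as an open conjecture, supported only by Claesson's data for \(k,n\le 45\) (yielding the ten values \(b_0,\ldots,b_9 = 4,8,\ldots,612\)), with the generating function \(\tfrac{2(1+x)(2-x^2)}{1-x}P(x)^2\) guessed from those values. So there is no proof in the paper to compare against, and the only question is whether your argument closes the gap. It does not. Your one verifiable step is correct and worth keeping: since \([x^k]R_m(x) = [x^{k-m+1}]\bigl(2(2+x)P(x)^2\bigr)\) depends only on \(k-m\), the four first differences entering \(b_{r,n}\) pair up (values \(k-m = n+r-5\) and \(n+r-6\), each with signs \(+\) and \(-\)), so the part of the differences governed by Theorem \ref{thm:1324} cancels and the conjecture is genuinely a statement about \(\av_{m+1}^k-\av_m^k\) beyond the proven range \(k\le 2m-7\). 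Everything after that observation, however, is a plan rather than a proof.

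Concretely, the missing pieces are exactly the hard ones: (i) a proven structural classification of \(\Av_n^k(1324)\) in the extended inversion range (your proposed ``\(2\)-almost-decomposability''), (ii) an injection extending \(f\) whose image can be controlled, and (iii) an enumeration of the residual class that produces the specific factor \(2(1+x)(2-x^2)\). None of these is carried out, and you yourself cite the obstruction the paper records in Section \ref{sec:discussion}: the permutation \(34\cdots(n-4)\,(n-2)\,1\,n\,2\,(n-1)\,(n-3)\), with only \(2n-5\) inversions, becomes decomposable only after deleting the non-adjacent entries \(1\) and \(2\), and reinserting them after applying \(g\) creates a \(1324\) pattern, so the naive two-point extension of the injection fails precisely where it is first needed. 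Your reading of the prefactor (the \(2\) and \((1+x)\) echoing \(2(2+x)\), the \((2-x^2)\) as an inclusion--exclusion correction) is plausible heuristics but carries no proof weight. As it stands, the statement remains exactly as open as in the paper; what you have is a reasonable research programme, with its decisive steps still to be done.
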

 
\begin{remark} 
  To guess the formula in the conjecture one really only needs four numbers $b_0,b_1,b_2,b_3$ since the numerator is a degree 3 polynomial, but it is true for all 10 numbers we have.
\end{remark}
 
\begin{remark} 
  If the conjecture is proven we could still not determine the numbers $\av_n^k$ for all $k\le 3n-15$ since we also would need starting values when $n=10+r$ of $(\av_{n+2}^{2n+r-5}-\av_{n+1}^{2n+r-5})-(\av_{n+1}^{2n+r-6}-\av_{n}^{2n+r-6})$. That sequence starts 
  \begin{equation*}
    12, 24, 41, 120, 274, 553, 1098, 2055
  \end{equation*}
  but we have no conjecture for what they are in general.
\end{remark}
 
To add an extra level of intrigue there seems to be a pattern also for fourth differences when trying to understand how the third differences $b_{r,n}$ deviate for $n<10+r$. More precisely, $b_{r,r+9}$ and $b_{r,r+8}$ seem to be $4(r-2)$ and $16(r-5)+32$ larger than in Conjecture \ref{conj:3diff} for $r\ge 3$ and 6, respectively.

\subsection{Improved bounds and unimodality}

Let \(c \leq 1\) be a constant such that the maximal value of each sequence \(\big(\av_n^k(1324)\big)_k\) occurs with \(k \leq c \cdot \binom n2\), and assume that Conjecture \ref{conj:1324anders} is true. Denote \(m_n = \binom n2\), \(c_n = \left\lfloor c \cdot m_n\right\rfloor\) and \(\rho = \exp\big(\pi \sqrt{2/3}\big)\). Then, using the same technique as in \cite[Theorem 17]{claesson_upper_2012},
\begin{align*}
  \av_n(1324) = \sum_k \av_n^k(1324) &\leq (m_n + 1) \max_k \av_n^k(1324) \\
  &\leq (m_n + 1) [x^{c_n}] P(x)^2 \\
  &\leq (m_n + 1) (c_n + 1) \rho^{\sqrt{2 c_n}} \\
  &\leq (m_n + 1) (c_n + 1) \rho^{\sqrt c n \sqrt{1 - 1/n}}.
\end{align*}
Taking the \(n\)th root and the limit as \(n \to \infty\),
\begin{equation*}
  L(1324) \leq \rho^{\sqrt c} = \exp \big(\pi \sqrt{2c / 3} \big).
\end{equation*}
Incidentally, \(c = 21/23 \approx 0.913\) gives \(L(1324) \leq 11.6004\), which is in the range of the estimation \(L(1324) = 11.600 \pm 0.003\) from \cite{conway_1324-avoiding_2015,conway_1324-avoiding_2018}. We do not believe that an improvement this drastic will be possible with this approach, since the sequences \(\big(\av_n^k(1324)\big)_k\) intuitively should be very top-heavy. It is possible to produce large random 1324-avoiders using the Monte Carlo method in \cite{madras_random_2010}, and they seem to support the intuition.

The method of estimation should, furthermore, be too rough to get an upper bound very close to the actual value of \(L(1324)\). Note that \(c > 0.813\), since \(c = 0.813\) gives \(L(1324) \leq 10.263\), contradicting the known lower bound \(10.27\).

\begin{question}
    Is there a constant \(c < 1\) such that the maximal value of each sequence \(\big(\av_n^k(1324)\big)_k\) occurs with \(k \leq c \cdot \binom n2\)?
\end{question}

This line of thinking leads to another natural question: are the sequences \((\av_n^k(1324)\big)_k\) unimodal? It is well-known (see \cite{bona_combinatorial_2004} for a nice proof) that \(\big(s_n^k\big)_k\) is log-concave, where \(s_n^k\) denotes the number of all permutations (not required to avoid any pattern) of length \(n\) with \(k\) inversions. As far as we know, there are no similar nontrivial results for the pattern avoiding case.

Log-concavity does not hold for \((\av_n^k(1324)\big)_k\), since e.g.\ \(2^2 < 1 \cdot 5\). On the other hand, if we remove the first \(n-1\) entries of each sequence, they do seem to be log-concave. Unimodality holds for the full sequences in the data we have, but we have not found a proof. Of special interest would be the position of the `tops' of the unimodal sequences, due to the discussion above.

\begin{conjecture}
  The sequence \(\left(\av_n^k(1324)\right)_{k=0}^{\binom n2}\) is unimodal for each \(n\).
\end{conjecture}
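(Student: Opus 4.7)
The plan is to prove unimodality by constructing, for each $k$ to the left of the peak, an injection $\phi_k : \Av_n^k(1324) \hookrightarrow \Av_n^{k+1}(1324)$, and symmetrically injections $\psi_k : \Av_n^k(1324) \hookrightarrow \Av_n^{k-1}(1324)$ for $k$ to the right of the peak. The most natural candidate is an adjacent transposition at an ascent: given $\pi \in \Av_n^k(1324)$ with $\pi_i < \pi_{i+1}$, swapping those two entries yields a permutation $\pi'$ with $\inv(\pi') = k+1$. The difficulty is twofold: the swap may introduce a $1324$ pattern, and the choice of ascent must be canonical so as to guarantee injectivity.

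First I would isolate a sufficient structural condition on the ascent position $i$ under which swapping preserves $1324$-avoidance. Any new $1324$ occurrence in $\pi'$ would be forced to use the swapped entries as the `$3$' and the `$2$' of the pattern respectively, which sharply limits the problematic configurations. Given such a condition, one can try taking $\phi_k(\pi)$ to be the swap at the leftmost safe ascent, and prove injectivity by designing a matching ``undo'' rule that recovers the position from $\phi_k(\pi)$, in the spirit of the reversibility analysis of $f$ via $h$ in Section \ref{sec:injection}. The range $k \leq n-2$ is already settled by Theorem \ref{thm:1324}: there $\av_n^k(1324) = [x^k] P(x)^2$, which is strictly increasing in $k$ because $P(x)^2$ is a positive convolution of the strictly increasing partition sequence $p(k)$. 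One can also try to extend the range where unimodality is automatic by a direct inspection of the generating function $P(x)^2 - R_n(x)/(1-x)$ through $k = 2n - 7$.

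The main obstacle is that the sequence is not symmetric about $\binom{n}{2}/2$: the complement map sends $\Av_n(1324)$ to $\Av_n(4231)$, which is Wilf-equivalent to $\Av_n(1324)$ but not refined-equivalent with respect to inversions (indeed $\av_4^1(1324) = 2$ while $\av_4^5(1324) = 3$). Hence there is no involution on $\Av_n(1324)$ reversing the inversion count, and the descending injection $\psi_k$ must be constructed independently of $\phi_k$. The crux is then a local dichotomy: one must show that for every $\pi \in \Av_n^k(1324)$ with $k$ not at the peak, exactly one of ``safe ascent-swap'' or ``safe descent-swap'' is canonically available. Establishing such a dichotomy is essentially equivalent to pinning down the location of the peak, and is therefore tangled with the question about the constant $c$ raised in the previous subsection --- I expect this entanglement to be the core difficulty.
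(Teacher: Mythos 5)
There is a genuine gap --- indeed, there is no proof here at all, and none in the paper either: this statement is left as an open conjecture (the authors explicitly say that unimodality holds in their data but that they have not found a proof), so the only question is whether your plan closes it, and it does not. Your correct observation that a swap at an ascent can only create a new $1324$ occurrence using the swapped pair as the ``$3$'' and ``$2$'' is a reasonable starting point, but every step that would carry the weight is deferred: you never specify the ``safe ascent'' condition, never prove that every $\pi\in\Av_n^k(1324)$ with $k$ below the peak possesses one, and never give the undo rule that would make the leftmost-safe-ascent map injective. More seriously, the whole scheme presupposes knowing where the peak is, since you must decide for which $k$ to build ascending injections $\phi_k$ and for which to build descending ones $\psi_k$; as you yourself note, the peak location is exactly the open question about the constant $c$ in the same section, so the ``local dichotomy'' you need is not a technical detail but the entire content of the conjecture. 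The ranges you call settled are also far from sufficient: Theorem \ref{thm:1324} controls only $k\leq 2n-7$, a window linear in $n$, whereas the peak sits at roughly $c\binom n2$ with $c>0.813$, so essentially the whole increasing range, the peak region, and the entire decreasing tail remain untouched. (Two minor points: $p(k)$ is only weakly increasing since $p(0)=p(1)=1$, though the convolution coefficients $[x^k]P(x)^2 = 1,2,5,10,\ldots$ are indeed strictly increasing; and your remark that complementation maps $\Av_n(1324)$ to $\Av_n(4231)$ and hence gives no inversion-reversing symmetry is correct and usefully explains why the decreasing side cannot be obtained for free.) In short, what you have is a plausible research programme, with the same obstruction the authors face, not a proof.
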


\section*{Acknowledgements}

We are grateful to Anders Claesson, who provided us with data for the numbers \(\av_n^k(1324),\) for \( k,n\le 45\), and correctly guessed the generating function \(R_n(x) = 2(2 + x)x^{n-1} P(x)^2\) for the differences \(\av_{n+1}^k(1324) - \av_n^k(1324)\).\\
Both authors have been funded by the Swedish Research Council, VR, grant 2022-03875.

\printbibliography

\end{document}